\titleformat{\chapter}[display]
{\normalfont\Large\bfseries}{\thechapter}{11pt}{\Large}
\titlespacing*{\chapter}{0pt}{0pt}{15pt} 
\titlespacing*{\section}{0pt}{3.5ex plus 1ex minus .2ex}{2.3ex plus .2ex}
\newtheorem{theorem}{Theorem}[section]
\newtheorem{definition}[theorem]{Definition}
\newtheorem{lemma}[theorem]{Lemma}
\newtheorem{problem}[theorem]{Problem}
\begin{document}

	\textwidth 150mm \textheight 225mm
\title{The $A_\alpha$ spectral radius of graphs with given independence number $n-4$ \thanks{Supported by the National Natural Science Foundation of China (No. 11871398).} }
\author{{Xichan Liu\textsuperscript{a,b}, Ligong Wang\textsuperscript{a,b,}\footnote{Corresponding author.}}\\
	{\small \textsuperscript{a} School of Mathematics and Statistics
	}\\
	{\small Northwestern Polytechnical University, Xi'an, Shaanxi 710129, P.R. China.}\\
	{\small \textsuperscript{b} Xi'an-Budapest Joint Research Center for Combinatorics}\\
	{\small  Northwestern Polytechnical University, Xi'an, Shaanxi 710129, P.R. China.}\\
	{\small E-mail: xichanliu1998@163.com, lgwangmath@163.com}}
\date{}
\maketitle

\begin{center}
	\begin{minipage}{135mm}
		\vskip 0.3cm
		\begin{center}
			{\small {\bf Abstract}}
		\end{center}
		{\small    Let $G$ be a graph with adjacency matrix $A(G)$  and degree diagonal matrix $D (G)$. In 2017, Nikiforov [Appl. Anal. Discrete Math., 11 (2017) 81--107]  defined the matrix $A_\alpha(G) = \alpha D(G) + (1-\alpha)A(G)$ for any real $\alpha\in[0,1]$. The largest eigenvalue of $A(G)$ is called the  spectral radius of $G$, while the largest eigenvalue of $A_\alpha(G)$ is called the $A_\alpha$ spectral radius of $G$. Let $\mathcal{G}_{n,i}$ be the set of graphs of order $n$ with independence number $i$. Recently, for all graphs in $\mathcal{G}_{n,i}$ having the minimum or the maximum $A$, $Q$ and $A_\alpha$ spectral radius  where $i\in\{1,2,\lfloor\frac{n}{2}\rfloor\,\lceil\frac{n}{2}\rceil+1,n-3,n-2,n-1\}$, there are some results have been given by Xu, Li and Sun et al., respectively.  In 2021, Luo and Guo [Discrete Math., 345 (2022) 112778] determined all graphs in $\mathcal{G}_{n,n-4}$ having the minimum spectral radius. In this paper, we  characterize the graphs in $\mathcal{G}_{n,n-4}$ having the minimum and the maximum $A_\alpha$ spectral radius  for $\alpha\in[\frac{1}{2},1)$, respectively.
			\vskip 0.1in \noindent {\bf Key Words}: \ Independence number; $A_\alpha$ matrix; $A_\alpha$ spectral radius;   \vskip
			0.1in \noindent {\bf AMS Subject Classification (2010)}: \ 05C50, 15A18 }
	
 \end{minipage}
\end{center}

\section{Introduction }
\label{sec:ch-introduction}
\textwidth 150mm \textheight 225mm
In this paper, we only consider simple and connected graphs. Let $G$ be a graph with vertex set $V(G) = \{v_1, v_2,\ldots, v_n\}$ and edge set $E(G)$. The order of $G$ is $n = |V(G)|$ and the size of $G$ is $m = |E(G)|$. If two vertices  $u$ and $v$ in $G$ are joined by an edge $uv$, then we say that the two vertices $u$, $v$ are adjacent or neighbors and the edge $uv$ is incident with $u$ and $v$. 
   The neighbor set of a vertex $v$ in $G$ is  denoted by $N_G(v)$. The degree of a vertex $v$ in $G$ is the number of edges incident with $v$, denoted by $d_G(v)$. The minimum degree  and the maximum degree of $G$ are denoted by $\delta(G)$ and $\Delta(G)$, respectively.  A subset $S\subseteq V(G)$  is called an independent set of $G$ if no two vertices of $S$ are adjacent in $G$. The independence number $i(G)$ of $G$ is the maximum cardinality of an independent set in $G$.   An internal path of $G$ is a path $v_{0}v_{1}v_{2} \ldots v_{s}$ $(s \geq 1)$  with  $d_{G}(v_0) \geq 3$, $d_{G}(v_s) \geq 3$ and $d_{G}(v_i) = 2$ $( i=1,2,\ldots,s-1)$. The complement of $G$ is a graph $G^c$ with $V(G^c)=V(G)$ and any two distinct vertices are adjacent in $G^c$ if and only if they are non-adjacent in $G$. For two vertices $u$ and $v$ in $G$,	let $G-u$ be a graph obtained from $G$ by deleting $u$ and all edges incident with $u$. Let $G-uv$ (resp. $G+uv$) be a graph obtained from $G$ by deleting (resp. adding) the edge $uv$. Let $G_1$,  $G_2$ be two vertex-disjoint graphs, we use $G_1\vee G_2$ to denote their join graph, whose vertex set is $V(G_1)\cup V(G_2)$, and its edge set equals $E(G_1)\cup E(G_2)\cup \{uv:u\in V(G_1),v\in V(G_2)\}$. A path, a cycle, a complete graph of order $n$ are denoted by $P_n$,  $C_n$, $K_n$, respectively.  A vertex of a graph is said to be major if its degree is at least 3. Let $T_{a,b,c}$ be a tree with only one major vertex $u$, such that $T_{a,b,c}-u = P_a \cup P_b \cup P_c$.
  Unless otherwise stated, we follow the traditional notations and terminologies in $\cite{b-2}$.

The adjacency matrix of $G$ is an $n\times n$ 0-1 matrix, denoted by $A(G)=[a_{ij}]$, where $a_{ij}=1$, if $v_iv_j\in E(G)$, otherwise $a_{ij}=0$. The spectral radius of  $G$ is the largest eigenvalue of $A(G)$, which is also called $A$ spectral radius of $G$. The degree diagonal matrix  of  $G$ is $D(G)=diag(d_{G}(v_1), d_{G}(v_2),\ldots, d_{G}(v_n))$. The Laplacian matrix of a graph $G$ is $L(G)=D(G)-A(G)$, while the signless Laplacian matrix of $G$ is  $Q(G)=D(G)+A(G)$. The $Q$ spectral radius of  $G$ is the largest eigenvalue of $Q(G)$, while the $L$ spectral radius of  $G$ is the largest eigenvalue of $L(G)$ . In 2017, Nikiforov $\cite{17-Nik}$ defined the $A_\alpha$ matrix of $G$ as $A_\alpha(G)= \alpha D(G) + (1-\alpha)A(G)$ for any real  $\alpha\in[0,1]$, it is clear that $A_0(G)=A(G)$,  $A_{\frac{1}{2}}(G)=\frac{1}{2}Q(G)$ and $A_{1}(G)=D(G)$. Thus the matrix $A_\alpha$ can underpin a unified theory of $A (G)$ and $Q (G)$. 
 The $A_\alpha$ spectral radius of  $G$ is the largest eigenvalue of $A_\alpha(G)$, denoted by $\lambda_{\alpha}(G)$.
 Since $G$ is connected, by the well-known Perron-Frobenius theorem, there exists a
 unit positive eigenvector $X=(x_{v_1},x_{v_2},\ldots,x_{v_n})^T$ corresponding to $\lambda_{\alpha}(G)$. For a vertex $v\in V(G)$, the eigenequation of $A_\alpha(G)$ corresponding to $v$ can be written as:
\begin{align}\label{AX}
\lambda_\alpha(G)x_v=\alpha d_G(v)x_v+(1-\alpha)\sum_{uv\in E(G)}x_u.
\end{align}

One of the classic problems in spectral extremal graph theory is Problem \ref{p-1}, which was raised generally by Brualdi and Solheid \cite{86-BRU} in 1986.
 \begin{problem}\label{p-1}
  Given a set of graphs $\mathcal{G}$, establish an upper or a lower bound for the spectral radius of
  graphs in $\mathcal{G}$, and characterize the graphs in which the maximum or the minimum spectral radius is achieved.
 \end{problem}
Recently, many results about the Problem \ref{p-1} can be refer to \cite{REF1,22nowheel,s-r4, REF6} and the references therein.
  It is similar to Problem \ref{p-1}, for a set of graphs with given some specific properties, how about its $A_\alpha$ spectral radius and what kind of graphs can attach the extremal values of $A_\alpha$  spectral radius?

In 2018, Nikiforov et al. \cite{18-NikO} determined the unique graph of order $n$ with given diameter at least $k$ having the maximum $A_\alpha$ spectral radius. Other results for the $A_\alpha$ spectral radius of some graphs with given  degree sequence, cut vertices and clique number, vertex degrees and so on can be refered to \cite{ 19-LCMdegree, 18-LHX,  REF12} and  the references therein. Further, other results for the $A_\alpha$ spectral radius of digraph can be refered to \cite{Aa-3, Aa-2, Aa-1} and the references therein.

Next, we consider the $A_\alpha$ spectral radius of graphs with given independence number. Let $\mathcal{G}_{n,i}$ be the set of graphs with order $n$ and independence number $i$. For  adjacent spectral radius, in 2009, Xu et al. $\cite{I-10}$ characterized the graphs in $\mathcal{G}_{n,i}$ having the minimum spectral radius with $i \in\{1,2,\lfloor\frac{n}{2}\rfloor\,\lceil\frac{n}{2}\rceil+1,n-3,n-2,n-1\}$.  For the signless Laplacian spectral radius, in 2010, Li et al.  $\cite{I-12}$ characterized the graphs in $\mathcal{G}_{n,i}$ having the minimum signless Laplacian spectral radius with $i \in\{1,2,\lfloor\frac{n}{2}\rfloor\,\lceil\frac{n}{2}\rceil+1,n-3,n-2,n-1\}$. Further, in 2021,  Sun et al.  \cite{SWT-LSC} extended the results to $A_\alpha$ spectral radius for some $\alpha\in[0,1)$. In 2021, Lou and Guo \cite{22-DM} showed that tree has the minimum spectral
radius over all graphs in $\mathcal{G}_{n,i}$ with $i\geq \lfloor\frac{n}{2} \rfloor$, and determined all the graphs in $\mathcal{G}_{n,n-4}$ having the minimum spectral
radius.
Therefore, it is  meaningful to study the $A_\alpha$ spectral radius of the graphs in $\mathcal{G}_{n,i}$ with $i \in[3,\lfloor\frac{n}{2}\rfloor-1]\cup [\lceil\frac{n}{2}\rceil+2,n-4]$.

In this paper, we characterize the graphs having the maximum and the minimum $A_\alpha$ spectral radius with given independence number $n-4$, respectively.
\section{ Preliminaries}
\label{sec:ch-pre}
\textwidth 150mm \textheight 225mm
In this section, we present some preliminary results, which are used in Section 3.
\begin{definition}\label{def-M}\rm(\cite{Bro})
	Let $M$ be a real matrix of order $n$ described in the following block form
	$$
	\left [
	\begin{matrix}
	M_{11} & \cdots & M_{1t} \\
	\vdots & \ddots & \vdots \\
	M_{t1} & \cdots & M_{tt}
	\end{matrix}
	\right  ],
	$$
	where the blocks $M_{ij}$ are $n_i\times n_j$ matrices for any $1\leq i,\ j\leq t$ and $n=n_1 + \cdots + n_t$.  Let $b_{ij}$ be the average row sum of $M_{ij}$ for any $1\leq i,\ j\leq t$. Let $B(M) =(b_{ij})$ be a $t\times t$ matrix (or simply $B$). If any $M_{ij}$ has a constant row sum, then $B$ is called the equitable quotient matrix of $M$.
\end{definition}
\begin{lemma}\label{equ M}\rm(\cite{Ylh})
	Let $B$ be the equitable quotient matrix of $M$ as defined in Definition \ref{def-M}, and $B$ is irreducible. Let $M$ be a nonnegative matrix. Then the spectral radius of $M$ is equal to the spectral radius of $B$.
\end{lemma}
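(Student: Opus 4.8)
The plan is to avoid comparing characteristic polynomials and instead work directly with the characteristic (indicator) matrix of the block partition. Let the index set $\{1,\dots,n\}$ be split into consecutive blocks of sizes $n_1,\dots,n_t$, and let $S$ be the $n\times t$ matrix whose $(v,i)$ entry equals $1$ when the index $v$ lies in the $i$-th block and $0$ otherwise. Then each column of $S$ is the indicator vector of one block, the columns are orthogonal, and $S$ has full column rank $t$; in particular every row of $S$ has exactly one nonzero entry, equal to $1$.

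The first and central step is to convert the equitable hypothesis into a single matrix identity. Because each block $M_{ij}$ has constant row sum $b_{ij}$, for any index $v$ in block $k$ one has $\sum_{u\in\text{block }i}M_{vu}=b_{ki}$; comparing this with the definition of $S$ shows that $(MS)_{vi}=b_{ki}=(SB)_{vi}$, i.e. $MS=SB$. This identity is the engine of the argument: it says that the column space of $S$ is $M$-invariant and that $M$ acts on it precisely as $B$ acts on $\mathbb{R}^t$.

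Next I would transport the Perron eigenvector of $B$ up to $M$. Since $B$ is nonnegative (its entries are row sums of the nonnegative blocks of $M$) and, by hypothesis, irreducible, the Perron--Frobenius theorem supplies a strictly positive eigenvector $y>0$ with $By=\rho(B)\,y$. Applying $MS=SB$ gives $M(Sy)=S(By)=\rho(B)\,(Sy)$, and since $y>0$ and each row of $S$ selects exactly one coordinate of $y$, the vector $Sy$ is strictly positive. Hence $\rho(B)$ is an eigenvalue of $M$ admitting a \emph{strictly positive} eigenvector.

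The last step, which I regard as the only genuinely delicate point, is to upgrade ``$\rho(B)$ is an eigenvalue of $M$'' to the equality $\rho(B)=\rho(M)$. For this I would invoke the standard fact that a nonnegative matrix possessing a strictly positive eigenvector must have the corresponding eigenvalue equal to its spectral radius. Concretely, let $w\ge 0$ with $w\neq 0$ be a nonnegative left Perron eigenvector of $M$, so $w^{T}M=\rho(M)\,w^{T}$; pairing it against $M(Sy)=\rho(B)(Sy)$ yields $\rho(M)\,w^{T}(Sy)=\rho(B)\,w^{T}(Sy)$, and because $Sy>0$ forces $w^{T}(Sy)>0$, we conclude $\rho(M)=\rho(B)$. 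Note that the irreducibility of $B$ enters exactly here: it is what guarantees the strict positivity of $y$, and hence of $Sy$, without which the scalar $w^{T}(Sy)$ could vanish and the cancellation would fail.
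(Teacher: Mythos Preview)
Your argument is correct. The intertwining identity $MS=SB$ is exactly the right translation of ``equitable'', the lift $Sy$ of the Perron vector of $B$ is strictly positive because $y>0$, and the pairing with a nonnegative left Perron eigenvector of $M$ (which exists for any nonnegative matrix, reducible or not) forces $\rho(M)=\rho(B)$ since $w^{T}(Sy)>0$.

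As for comparison with the paper: there is nothing to compare. The paper does not prove this lemma; it quotes it from \cite{Ylh} and uses it as a black box. Your self-contained proof via the characteristic matrix $S$ and Perron--Frobenius is the standard one and is precisely the argument given in that reference, so you have essentially reproduced the cited proof.
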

\noindent\begin{lemma}\label{le:subdivide} \rm(\cite{19-LCMdegree}) Let $G$ be a connected graph and $uv$ be an edge in an internal path of $G$. Let $G_{uv}$ be the graph obtained from $G$ by subdividing the edge $uv$ into edges $uw$ and $wv$. Then $\lambda_\alpha(G_{uv})< \lambda_\alpha(G)$ for $\alpha\in[0,1)$.
\end{lemma}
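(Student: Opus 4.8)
The plan is to compare the two graphs through the Rayleigh quotient, using the Perron eigenvector of the larger graph $G_{uv}$ as a test vector for the smaller graph $G$. Write $\lambda'=\lambda_\alpha(G_{uv})$ and let $x=(x_t)_{t\in V(G_{uv})}>0$ be its unit Perron eigenvector, where $w$ is the new degree-two vertex with neighbours $u,v$. Let $y$ be the restriction of $x$ to $V(G)$, viewed as a test vector for $A_\alpha(G)$. First I would record the edgewise form of the quadratic form, $z^{T}A_\alpha(H)z=\sum_{ab\in E(H)}\bigl[\alpha(z_a^2+z_b^2)+2(1-\alpha)z_az_b\bigr]$. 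Since $E(G_{uv})=(E(G)\setminus\{uv\})\cup\{uw,wv\}$, every edge not meeting $w$ contributes identically to both forms, so only the edges $uv$, $uw$, $wv$ must be accounted for. A short computation using $\|y\|^2=1-x_w^2$ and the eigenequation at $w$, namely $(\lambda'-2\alpha)x_w=(1-\alpha)(x_u+x_v)$, collapses everything to the clean identity
\[
 y^{T}A_\alpha(G)y-\lambda'\,y^{T}y=(1-\alpha)\bigl[\,2x_ux_v-(x_u+x_v)x_w\,\bigr].
\]
By the variational characterisation $\lambda_\alpha(G)\ge y^{T}A_\alpha(G)y/y^{T}y$, the lemma follows once the bracket is strictly positive, i.e. once $x_w<2x_ux_v/(x_u+x_v)$: the entry at the inserted vertex must be strictly smaller than the harmonic mean of its two neighbours.

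Next I would exploit that $u,v$ lie on an internal path, so along its degree-two vertices the Perron entries obey the second-order recurrence $(\lambda'-2\alpha)x_i=(1-\alpha)(x_{i-1}+x_{i+1})$; with $\beta=(\lambda'-2\alpha)/(1-\alpha)$ this reads $x_{i+1}=\beta x_i-x_{i-1}$, with solutions $x_i=At^i+Bt^{-i}$ where $t+t^{-1}=\beta$, and the discrete Wronskian $x_{i-1}x_{i+1}-x_i^2=AB(t-t^{-1})^2$ is constant along the path. Substituting $x_u=x_p,\ x_w=x_{p+1},\ x_v=x_{p+2}$ and $x_u+x_v=\beta x_w$, the target inequality $\beta x_w^2<2x_ux_v$ becomes $(\beta-2)x_w^2<2\bigl(x_ux_v-x_w^2\bigr)$, hence (writing $P=At^{p+1}$, $Q=Bt^{-p-1}$ for the two mode amplitudes at the location of $w$) the symmetric ratio condition
\[
 \frac{P}{Q}+\frac{Q}{P}<2(\beta+1).
\]

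The decisive structural inputs are then two. First, $\lambda'=\lambda_\alpha(G_{uv})>2$: since $G_{uv}$ still carries an internal path it contains two vertices of degree at least $3$, and for $\alpha\in(0,1)$ this forces $\lambda_\alpha(G_{uv})>2$ strictly, because $\tfrac{d}{d\alpha}\lambda_\alpha=x^{T}(D-A)x>0$ for a connected non-regular graph, so the only graphs with a vertex of degree $\ge 3$ and $\lambda_\alpha=2$ are the Smith graphs at $\alpha=0$. This gives $\beta>2$, which is exactly what makes the right-hand side $2(\beta+1)$ large enough. Second, the eigenequations at the two endpoints of degree $\ge 3$ give boundary relations of the form $x_1/x_0=\beta-\Sigma_0/x_0-\alpha(d_0-2)/(1-\alpha)$ with $\Sigma_0>0$; combined with positivity of $x$ across the whole path, these pin the amplitude ratio $P/Q$ inside the admissible window at every interior position, and in particular at $w$.

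I expect the amplitude-ratio estimate of the previous paragraph to be the main obstacle: controlling $P/Q$ uniformly requires propagating the Perron entries across the entire internal path and feeding in both endpoint boundary conditions, and the cases in which the subdivided edge is incident with a degree-$\ge 3$ endpoint (so that $u$ or $v$ is not governed by the interior recurrence) must be treated separately. A cleaner, more computational alternative that dispatches all cases at once is the Hoffman--Smith route through characteristic polynomials: establish the three-term reduction for $\det(\lambda I-A_\alpha(\cdot))$ as the internal path is lengthened by one vertex, and show that its sign at $\lambda=\lambda_\alpha(G)$ forces $\lambda_\alpha(G_{uv})<\lambda_\alpha(G)$ whenever $\lambda>2$.
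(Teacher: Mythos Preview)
First, note that this lemma is quoted from \cite{19-LCMdegree} and the present paper gives no proof of it, so there is nothing here to compare your argument against directly. What follows are comments on your outline itself.

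Your Rayleigh identity
\[
y^{T}A_\alpha(G)y-\lambda'\,y^{T}y=(1-\alpha)\bigl[2x_ux_v-(x_u+x_v)x_w\bigr]
\]
is correct, and the further reduction to $P/Q+Q/P<2(\beta+1)$ is algebraically sound, but only under the tacit hypothesis $PQ>0$, i.e.\ $AB>0$. This is exactly where the argument breaks. If $AB\le 0$ then the discrete Wronskian $x_ux_v-x_w^{2}=AB(t-t^{-1})^{2}$ is nonpositive, so $2x_ux_v\le 2x_w^{2}<\beta x_w^{2}$ and your bracket is \emph{negative}: the restricted Perron vector of $G_{uv}$ then fails as a test vector for $G$. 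To rescue the approach you would have to prove that the Perron entries along the internal path of $G_{uv}$ are always strictly log-convex ($AB>0$), and then still control the ratio $P/Q$; neither step is carried out, and you rightly flag this as ``the main obstacle.'' The endpoint relations you write down yield only $x_{-1}>0$ and $x_{s+2}>0$, which by themselves do not force $AB>0$. A separate point: at $\alpha=0$ the subdivided graph $G_{uv}$ can be the Smith graph $W_{n}$ of Lemma~\ref{Wang1}, in which case $\lambda'=2$, $\beta=2$, $t=1$, and your whole set-up degenerates; indeed $\lambda_0(G_{uv})=\lambda_0(G)=2$ there, so the strict inequality needs an exception your argument does not isolate.

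The Hoffman--Smith characteristic-polynomial route you mention at the end is in fact the method used in \cite{19-LCMdegree}, mirroring the classical $A$-spectrum proof: one establishes a three-term recurrence for $\det(\lambda I-A_\alpha)$ in the length of the internal path and tracks its sign at $\lambda=\lambda_\alpha(G)$. Since you only gesture at this and do not carry it out, the proposal as it stands is a correct diagnosis of the difficulty rather than a proof.
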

\noindent\begin{lemma}\label{le:g-uv} \rm(\cite{17-Nik}) Let $G$ be a connected graph and $G_0$ be a proper subgraph of $G$. Then $\lambda_\alpha(G_0)<\lambda_{\alpha}(G)$ for $\alpha\in[0, 1)$.
\end{lemma}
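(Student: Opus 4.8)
The plan is to treat $A_\alpha(G)$ as a nonnegative symmetric matrix and exploit two features that hold precisely because $G$ is connected and $\alpha\in[0,1)$: its off-diagonal zero pattern coincides with that of $A(G)$ and $1-\alpha>0$, so $A_\alpha(G)$ is \emph{irreducible}. By the Perron--Frobenius theorem this yields a strictly positive unit eigenvector for $\lambda_\alpha(G)$ together with the Rayleigh characterization $\lambda_\alpha(G)=\max_{\|X\|=1}X^{T}A_\alpha(G)X$, and this variational description is what will convert the subgraph relation $G_0\subseteq G$ into a spectral inequality.

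First I would reduce to a connected test graph. Since the $A_\alpha$ spectral radius of a graph with several components is the maximum of the spectral radii of its components, I choose a connected component $C$ of $G_0$ with $\lambda_\alpha(C)=\lambda_\alpha(G_0)$ and let $Y$ be its positive unit Perron eigenvector; extending $Y$ by $x_v=0$ for $v\in V(G)\setminus V(C)$ gives a unit vector $X$ on $V(G)$. Writing the quadratic form
\begin{align*}
X^{T}A_\alpha(G)X=\alpha\sum_{v\in V(G)}d_G(v)x_v^{2}+2(1-\alpha)\sum_{uv\in E(G)}x_ux_v
\end{align*}
and comparing term by term with $Y^{T}A_\alpha(C)Y=\lambda_\alpha(C)$, the relevant monotonicities are $d_G(v)\ge d_C(v)$ for every $v\in V(C)$ (because $C\subseteq G$) and $E(C)\subseteq E(G)$, while all entries of $X$ are nonnegative. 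These give $X^{T}A_\alpha(G)X\ge\lambda_\alpha(C)=\lambda_\alpha(G_0)$, whence $\lambda_\alpha(G)\ge X^{T}A_\alpha(G)X\ge\lambda_\alpha(G_0)$ by the Rayleigh bound.

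The main obstacle, and the only genuinely delicate point, is upgrading ``$\ge$'' to strict ``$>$''; this is exactly where connectedness of $G$, properness of $G_0$, and the restriction $\alpha<1$ are all used. I would argue by contradiction: if equality held throughout, then $X$ would attain the maximum of the Rayleigh quotient and hence be a Perron eigenvector of the irreducible matrix $A_\alpha(G)$, forcing $x_v>0$ for every $v\in V(G)$. This already forces $V(C)=V(G)$, so $G_0$ is the single connected graph $C=G_0$ on the full vertex set, and properness then means $E(G_0)\subsetneq E(G)$. But with $X>0$ everywhere, the edge sum $\sum_{uv\in E(G)}x_ux_v$ strictly exceeds $\sum_{uv\in E(C)}x_ux_v$ by the positive contribution of any edge in $E(G)\setminus E(C)$, and since $1-\alpha>0$ the middle inequality above is strict --- a contradiction. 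Hence $\lambda_\alpha(G_0)<\lambda_\alpha(G)$. I would close by noting that the hypothesis $\alpha<1$ is essential: at $\alpha=1$ one has $A_1(G)=D(G)$ with spectral radius $\Delta(G)$, and deleting an edge not incident with a vertex of maximum degree leaves $\Delta(G)$ unchanged, so strictness fails.
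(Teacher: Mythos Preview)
Your argument is correct: the Rayleigh--Perron--Frobenius route you take is the standard way to prove this monotonicity, and each step (the reduction to a connected component of $G_0$, the zero-extension of the Perron vector, the quadratic-form comparison, and the strictness via irreducibility) is sound.

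There is nothing to compare against in this paper: Lemma~\ref{le:g-uv} is stated here without proof and attributed to Nikiforov~\cite{17-Nik}, so the paper simply cites the result rather than proving it. Your write-up supplies exactly the kind of proof one would expect for such a citation.
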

\noindent\begin{lemma}\label{le:g-u=g-v} \rm(\cite{17-Nik})
	Let $G$ be a connected graph and $X$ be the positive eigenvector of $A_\alpha(G)$ corresponding to $\lambda_\alpha(G)$. For two vertices $u$, $v$ in $G$, if there exists
	an automorphism $p$: $G \rightarrow G$ such that $p (u) = v$, then $x_u=x_v$ for $\alpha\in[0,1]$.
\end{lemma}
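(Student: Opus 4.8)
The plan is to exploit the fact that an automorphism leaves the matrix $A_\alpha(G)$ invariant under relabelling: applying the automorphism to the Perron eigenvector produces a second eigenvector for the same eigenvalue, and simplicity of the Perron eigenvalue then forces the two to coincide, whence $x_u = x_v$.

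First I would encode $p$ by its $n\times n$ permutation matrix $P$ via $(PX)_w = x_{p(w)}$ for every vertex $w$, and set $Y := PX$. Because $p$ is an automorphism it preserves adjacency, $wz\in E(G)\iff p(w)p(z)\in E(G)$, and hence also degrees, $d_G(w)=d_G(p(w))$. Consequently both $A(G)$ and $D(G)$ are fixed by conjugation with $P$, and therefore so is $A_\alpha(G)=\alpha D(G)+(1-\alpha)A(G)$; equivalently $(A_\alpha(G))_{w,z}=(A_\alpha(G))_{p(w),p(z)}$ for all $w,z$.

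Next I would feed $Y$ into the eigenequation \eqref{AX}. Writing $y_w=x_{p(w)}$ and using that, as $z$ runs over the neighbours of $w$, the images $p(z)$ run exactly over the neighbours of $p(w)$, together with $d_G(w)=d_G(p(w))$, the right-hand side of \eqref{AX} at the vertex $w$ becomes precisely $\alpha d_G(p(w))x_{p(w)}+(1-\alpha)\sum_{u\,p(w)\in E(G)}x_u$, which by \eqref{AX} at the vertex $p(w)$ equals $\lambda_\alpha(G)x_{p(w)}=\lambda_\alpha(G)y_w$. Thus $Y$ is again an eigenvector of $A_\alpha(G)$ for the eigenvalue $\lambda_\alpha(G)$, and $Y$ is positive because $X$ is.

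Finally I would invoke the Perron--Frobenius theorem. For $\alpha\in[0,1)$ the matrix $A_\alpha(G)$ is nonnegative and, since $G$ is connected, irreducible; hence $\lambda_\alpha(G)$ is a simple eigenvalue whose eigenspace is one-dimensional and spanned by a positive vector. As $X$ and $Y$ are both positive unit eigenvectors for this eigenvalue, they must coincide, so $x_{p(w)}=x_w$ for every $w$; taking $w=u$ gives $x_v=x_{p(u)}=x_u$. The only delicate point is the endpoint $\alpha=1$, where $A_1(G)=D(G)$ is diagonal and irreducibility fails; there a positive eigenvector for the top eigenvalue exists only in the regular case, in which the conclusion is immediate, so I would dispatch that case separately. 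I expect the main (and fairly modest) obstacle to be fixing the permutation-matrix conventions so that the invariance identity and the index bookkeeping in \eqref{AX} line up consistently.
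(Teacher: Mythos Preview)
The paper does not supply its own proof of this lemma; it is quoted from \cite{17-Nik} and used as a black box. Your argument is the standard one and is correct: the permutation matrix of an automorphism commutes with $A_\alpha(G)$, so $PX$ is again a positive eigenvector for $\lambda_\alpha(G)$, and for $\alpha\in[0,1)$ simplicity of the Perron eigenvalue forces $PX=X$. Your separate handling of $\alpha=1$ is also right, and in fact necessary: when $A_1(G)=D(G)$ the hypothesis that a positive eigenvector exists already forces $G$ to be regular, in which case $D(G)$ is a scalar matrix and the conclusion is trivial (though uniqueness of $X$ fails there, so the phrase ``the positive eigenvector'' in the statement is slightly abusive at that endpoint).
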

\begin{lemma}\label{le:3.2} \rm(\cite{SWT-LSC})
	Let $G$ be a graph with $u,v\in V(G)$ and there exists
	an automorphism $p$: $G \rightarrow G$ such that $p (u) = v$. Assume $G(s,t)$ ($s\geq t \geq 1$) is a graph obtained from $G$ by attaching $s$ new pendent edges at $u$ and $t$ new pendent edges at $v$. If $\alpha\in[0,1)$, then $\lambda_\alpha(G(s+1,t-1))>\lambda_\alpha(G(s,t))$.
\end{lemma}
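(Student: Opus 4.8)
The plan is to fix a unit Perron eigenvector $X=(x_w)$ of $A_\alpha(G(s,t))$, which is entrywise positive by the Perron--Frobenius theorem, with eigenvalue $\lambda:=\lambda_\alpha(G(s,t))$, and to realise the passage from $G(s,t)$ to $G(s+1,t-1)$ as a single edge rotation. Concretely, letting $z$ be one of the $t$ pendant neighbours of $v$, we have $G(s+1,t-1)=G(s,t)-zv+zu$, so the two $A_\alpha$ matrices differ only in the $z$-row/column and in the diagonal entries at $u$ and $v$. Evaluating the quadratic form then yields the clean identity
$$X^{T}A_\alpha(G(s+1,t-1))X=\lambda+(x_u-x_v)\bigl[\alpha(x_u+x_v)+2(1-\alpha)x_z\bigr].$$
Since $\alpha(x_u+x_v)+2(1-\alpha)x_z>0$ for $\alpha\in[0,1)$, everything reduces to proving the single inequality $x_u\ge x_v$, after which the Rayleigh bound $\lambda_\alpha(G(s+1,t-1))\ge X^{T}A_\alpha(G(s+1,t-1))X$ takes over.

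The main obstacle is establishing $x_u\ge x_v$, because the attached pendants break the symmetry between $u$ and $v$, so Lemma \ref{le:g-u=g-v} does not apply to $G(s,t)$ directly. My plan is to eliminate the pendant vertices: using the eigenequation (\ref{AX}), each pendant at $u$ (resp.\ $v$) has value $(1-\alpha)x_u/(\lambda-\alpha)$ (resp.\ $(1-\alpha)x_v/(\lambda-\alpha)$), and substituting back shows that the restriction $x$ of $X$ to $V(G)$ satisfies $(\lambda I-A_\alpha(G))x=cs\,x_u e_u+ct\,x_v e_v$, where $c=\alpha+(1-\alpha)^2/(\lambda-\alpha)>0$ and $e_u,e_v$ are coordinate vectors. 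Because the base graph $G$ is a proper subgraph of $G(s,t)$, Lemma \ref{le:g-uv} gives $\lambda>\lambda_\alpha(G)$, the largest eigenvalue of $A_\alpha(G)$, so $R:=(\lambda I-A_\alpha(G))^{-1}$ exists and is entrywise positive (its Neumann series has nonnegative terms and $G$ is connected). If $P$ is the permutation matrix of $p$, then $P$ commutes with $A_\alpha(G)$ and hence with $R$; combined with $p(u)=v$ this forces $R_{uu}=R_{vv}=:\beta$, while symmetry and positive definiteness give $R_{uv}=R_{vu}=:\gamma$ with $0<\gamma<\beta$.

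Projecting $x=cs\,x_u R e_u+ct\,x_v R e_v$ onto the $u$- and $v$-coordinates produces the two scalar relations $x_u=cs\beta x_u+ct\gamma x_v$ and $x_v=cs\gamma x_u+ct\beta x_v$. The first (with $x_v>0$, $\gamma>0$, $t\ge 1$) forces $1-cs\beta>0$, and subtracting the second gives $x_u-x_v=\eta(sx_u-tx_v)$ with $\eta:=c(\beta-\gamma)>0$, i.e.\ $(1-\eta s)x_u=(1-\eta t)x_v$. Since $\eta s<cs\beta<1$ both coefficients are positive, and as $s\ge t$ we get $1-\eta t\ge 1-\eta s>0$, whence $x_u\ge x_v$ (with equality precisely when $s=t$). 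This resolvent-positivity plus commuting-automorphism argument is the one genuinely delicate point; the rest is bookkeeping.

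Finally I would combine the two parts. The inequality $x_u\ge x_v$ makes the bracketed identity at least $\lambda$, so $\lambda_\alpha(G(s+1,t-1))\ge\lambda$. For strictness I argue by contradiction: if equality held, then $X$ would attain the Rayleigh maximum for $A_\alpha(G(s+1,t-1))$ and would therefore be its (positive) Perron eigenvector; comparing the eigenequation at $v$ in $G(s+1,t-1)$ with that in $G(s,t)$---which differ exactly by the lost neighbour $z$ and the drop of $1$ in $d(v)$---would force $\alpha x_v+(1-\alpha)x_z=0$, impossible since $x_v,x_z>0$ and $\alpha\in[0,1)$. Hence $\lambda_\alpha(G(s+1,t-1))>\lambda_\alpha(G(s,t))$, as claimed.
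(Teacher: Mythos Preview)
The paper does not prove Lemma~\ref{le:3.2}; it is quoted from \cite{SWT-LSC} as a preliminary result. So there is no in-paper argument to compare against, and the relevant question is simply whether your proof is correct. It is.

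A few remarks on how your argument relates to the tools already available in the paper. Once you have established $x_u\ge x_v$ for the Perron vector of $G(s,t)$, the conclusion follows immediately from Lemma~\ref{le:g-uw+vw} applied with $Y=\{z\}$ (moving a pendant from $v$ to $u$); your Rayleigh-quotient computation and strictness argument are essentially the proof of that lemma specialised to a single edge, so quoting it would shorten the write-up without changing the mathematics. The substantive part of your proof is therefore the inequality $x_u\ge x_v$, and your resolvent approach handles it cleanly: the key observations that (i) $\lambda>\lambda_\alpha(G)$ makes $R=(\lambda I-A_\alpha(G))^{-1}$ exist, entrywise positive, and positive definite, and (ii) the automorphism forces $R_{uu}=R_{vv}$ while positive definiteness forces the off-diagonal entry to be strictly smaller, are exactly what is needed. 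The reduction to the scalar relation $(1-\eta s)x_u=(1-\eta t)x_v$ with $0<\eta s<cs\beta<1$ is correct and yields $x_u\ge x_v$ with equality iff $s=t$.

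Two small points you rely on implicitly are worth making explicit in a final write-up: the paper's standing convention that all graphs are connected is what guarantees both the positivity of the Perron vector of $G(s,t)$ and the irreducibility (hence entrywise positivity of $R$) for $G$; and $\lambda>\alpha$ (needed in the pendant elimination) holds because $G(s,t)$ properly contains $K_2$, so $\lambda\ge 1>\alpha$.
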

\noindent\begin{lemma}\label{le:g-uw+vw} \rm(\cite{18-XLL})
	Let $G$ be a connected graph. For  $u,v\in V(G)$, suppose $Y\subseteq N_G(u)\setminus(N_G(v)\cup\{v\})$. Let $G^{'}=G-\{uw:w\in Y\}+\{vw:w\in Y\}$ and $X$ be a positive eigenvector of $A_{\alpha}(G)$ corresponding to $\lambda_{\alpha}(G)$. If $Y\neq\emptyset$ and $x_v\geq x_u$, then $\lambda_{\alpha}(G^{'})>\lambda_{\alpha}(G)$ for $\alpha\in [0,1)$.
\end{lemma}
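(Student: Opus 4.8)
The plan is to use the variational (Rayleigh-quotient) characterization of the largest eigenvalue of the symmetric matrix $A_\alpha$, combined with its edge-form quadratic expression. For any graph $H$ and any vector $Z$ one has
\[
Z^T A_\alpha(H) Z = \sum_{\{a,b\}\in E(H)} \big[\alpha(z_a^2 + z_b^2) + 2(1-\alpha) z_a z_b\big],
\]
since $\sum_{v} d_H(v) z_v^2 = \sum_{\{a,b\}\in E(H)} (z_a^2 + z_b^2)$. Normalizing the given Perron vector so that $X^T X = 1$, we have $X^T A_\alpha(G) X = \lambda_\alpha(G)$, and by the variational principle $\lambda_\alpha(G') \geq X^T A_\alpha(G') X$.

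The first step is to compute the difference $X^T A_\alpha(G')X - X^T A_\alpha(G) X$. Since $G'$ is obtained from $G$ by deleting each edge $uw$ ($w\in Y$) and inserting the edge $vw$, only these $|Y|$ edges change, and the edge-form collapses the computation to
\[
X^T A_\alpha(G')X - \lambda_\alpha(G) = \sum_{w\in Y} (x_v - x_u)\big[\alpha(x_u+x_v) + 2(1-\alpha)x_w\big].
\]
Because $X$ is positive and $\alpha \in [0,1)$, each bracket is strictly positive, so the sum is $\geq 0$ and is strictly positive as soon as $x_v > x_u$. Hence when $x_v > x_u$ we immediately obtain $\lambda_\alpha(G') \geq X^T A_\alpha(G') X > \lambda_\alpha(G)$.

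The main obstacle is the boundary case $x_v = x_u$, where the Rayleigh comparison yields only $\lambda_\alpha(G') \geq \lambda_\alpha(G)$; the plan is to upgrade this to a strict inequality by contradiction. If $\lambda_\alpha(G') = \lambda_\alpha(G)$, then the unit vector $X$ attains the maximum of the Rayleigh quotient of $A_\alpha(G')$, so $X$ must be an eigenvector of $A_\alpha(G')$ for $\lambda_\alpha(G')$. Writing the eigenequation \eqref{AX} at the vertex $u$ for both $A_\alpha(G)$ and $A_\alpha(G')$ and subtracting, and using $d_{G'}(u) = d_G(u) - |Y|$ together with the fact that $u$ loses exactly the neighbours $w\in Y$, one is led to
\[
0 = -\alpha|Y|x_u - (1-\alpha)\sum_{w\in Y} x_w,
\]
whose right-hand side is strictly negative since $|Y|\geq 1$, $x_u,x_w > 0$ and $1-\alpha > 0$. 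This contradiction forces $\lambda_\alpha(G') > \lambda_\alpha(G)$ in every case, for all $\alpha\in[0,1)$. The only point to keep in mind is that both the quadratic-form identity and the variational inequality hold for the symmetric matrix $A_\alpha(G')$ without any connectivity assumption on $G'$, so no extra hypothesis is needed.
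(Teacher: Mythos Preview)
The paper does not supply its own proof of this lemma: it is quoted verbatim from \cite{18-XLL} as a preliminary tool, so there is nothing in the paper to compare your argument against. Your proof is correct and is essentially the standard one for edge-switching lemmas of this type---expand $X^TA_\alpha(\cdot)X$ as an edge sum, observe that the difference over the swapped edges factors as $(x_v-x_u)[\alpha(x_u+x_v)+2(1-\alpha)x_w]\ge 0$, and in the boundary case $x_v=x_u$ derive a contradiction from the eigenequation at $u$. The handling of the equality case is clean, and your remark that the Rayleigh bound for $A_\alpha(G')$ requires no connectivity of $G'$ is a worthwhile clarification.
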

\noindent\begin{lemma}\label{le:bound1} \rm(\cite{17-Nik})
Let $G$ be a graph with  maximum degree $\Delta$. If $\alpha\in[0,1)$, then $$\lambda_{\alpha}(G)\geq \frac{1}{2}\big(\alpha(\Delta+1)+\sqrt{\alpha^{2}(\Delta+1)^2+4\Delta(1-2\alpha)}\ \big).$$
{If $G$ is connected, equality holds if and only if $G\cong K_{1,\Delta}$. In particular,}
\begin{equation*}\label{eq:1.1}
\lambda_\alpha(G)\geq
\left\{
\begin{array}{ll}
\alpha(\Delta+1),&  \textrm{if $\alpha\in [0,\frac{1}{2}],$}\\
\alpha\Delta+\frac{(1-\alpha)^2}{\alpha},&  \textrm{if $\alpha\in [\frac{1}{2},1).$}\\
\end{array}
\right.
\end{equation*}
\end{lemma}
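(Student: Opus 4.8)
The plan is to exhibit $K_{1,\Delta}$ as a subgraph of $G$, compute its $A_\alpha$ spectral radius exactly, and then invoke the subgraph monotonicity of Lemma \ref{le:g-uv}. Let $v$ be a vertex of $G$ with $d_G(v)=\Delta$. Together with its $\Delta$ neighbours and the $\Delta$ edges joining them to $v$, this vertex spans a copy of the star $K_{1,\Delta}$ inside $G$. If $G$ is connected and $G\not\cong K_{1,\Delta}$, then this copy is a proper subgraph, so Lemma \ref{le:g-uv} gives $\lambda_\alpha(G)>\lambda_\alpha(K_{1,\Delta})$, while $G\cong K_{1,\Delta}$ gives equality; for a disconnected $G$ one applies the same argument to the component containing $v$, using that $\lambda_\alpha$ of a graph equals the maximum of $\lambda_\alpha$ over its components. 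Hence everything reduces to computing $\lambda_\alpha(K_{1,\Delta})$ and checking it equals the displayed radical.

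To evaluate $\lambda_\alpha(K_{1,\Delta})$ I would use Lemma \ref{equ M} with the partition of $V(K_{1,\Delta})$ into the centre $\{v\}$ and the $\Delta$ leaves. Since $A_\alpha=\alpha D+(1-\alpha)A$, the centre row has diagonal entry $\alpha\Delta$ and total weight $(1-\alpha)\Delta$ into the leaf class, while each leaf has diagonal $\alpha$ and a single entry $1-\alpha$ towards the centre; all four blocks have constant row sums, so the partition is equitable and
\begin{equation*}
B=\begin{pmatrix} \alpha\Delta & (1-\alpha)\Delta \\ 1-\alpha & \alpha \end{pmatrix}.
\end{equation*}
As $A_\alpha(K_{1,\Delta})$ is nonnegative and $B$ is irreducible, Lemma \ref{equ M} identifies $\lambda_\alpha(K_{1,\Delta})$ with the spectral radius of $B$, i.e.\ with the larger root of
\begin{equation*}
\lambda^2-\alpha(\Delta+1)\lambda+\Delta(2\alpha-1)=0,
\end{equation*}
where I used $\alpha^2-(1-\alpha)^2=2\alpha-1$ for the constant term. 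Its larger root is precisely $\tfrac12\big(\alpha(\Delta+1)+\sqrt{\alpha^2(\Delta+1)^2+4\Delta(1-2\alpha)}\big)$, which establishes the displayed bound together with its equality case.

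For the ``in particular'' estimates I would argue through the quadratic $g(\lambda)=\lambda^2-\alpha(\Delta+1)\lambda+\Delta(2\alpha-1)$, whose larger root is $\lambda_\alpha(K_{1,\Delta})$. Because $g$ opens upward, it suffices to check that $g$ is non-positive at the candidate value, which forces that value to lie between the two roots and hence below the larger one. For $\alpha\in[0,\tfrac12]$ one computes $g(\alpha(\Delta+1))=\Delta(2\alpha-1)\le 0$, giving $\lambda_\alpha(G)\ge\alpha(\Delta+1)$. For $\alpha\in[\tfrac12,1)$ set $t=\alpha\Delta+\tfrac{(1-\alpha)^2}{\alpha}$; a short manipulation collapses to the clean identity $\alpha^2 g(t)=(1-2\alpha)(1-\alpha)^2\le 0$, whence $\lambda_\alpha(G)\ge t$. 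The only mild subtlety, and the step I would be most careful to spell out, is confirming that a non-positive value of $g$ places the candidate below the \emph{larger} root rather than below the smaller one; this is immediate, since $g\le 0$ sandwiches the point between both roots, so it is in any case at most the larger root, exactly as the bound asserts.
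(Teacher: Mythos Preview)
Your argument is correct. The paper does not supply its own proof of this lemma; it is quoted verbatim from Nikiforov \cite{17-Nik} as a known result and then used as a tool in Lemmas~\ref{le:ch-11} and~\ref{le:3.7}. So there is no ``paper's proof'' to compare against, and your write-up stands on its own.

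Your approach is clean and self-contained within the paper's toolkit: you reduce to $\lambda_\alpha(K_{1,\Delta})$ via Lemma~\ref{le:g-uv}, compute that value exactly through the equitable quotient matrix and Lemma~\ref{equ M}, and then handle the two simplified lower bounds by evaluating the characteristic quadratic at the candidate points. The identity $\alpha^2 g(t)=(1-2\alpha)(1-\alpha)^2$ is correct (the $\Delta$-terms cancel since $(1-\alpha)^2-\alpha^2=1-2\alpha$), and your remark that $g(t)\le 0$ forces $t$ to lie between the two real roots, hence below the larger one, is the right justification. The treatment of disconnected $G$ via the component containing a vertex of maximum degree is also fine, since $A_\alpha$ is block-diagonal over components. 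One cosmetic point: Lemma~\ref{le:g-uv} is itself cited from \cite{17-Nik}, so within Nikiforov's original paper these facts may be derived in a different order; but in the present paper both are taken as given, so there is no circularity in your use of one to establish the other.
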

\noindent\begin{lemma}\label{le:bound2} \rm(\cite{17-Nik})
	Let $G$ be a graph. If $\alpha\in [0,1]$, then
	$$
	\frac{2|E(G)|}{|V(G)|}\leq \lambda_\alpha(G)\leq \max_{uv\in E(G)}\{\alpha d_G(u)+(1-\alpha)d_G(v)\},
	$$
	the first equality holds if and only if $G$ is regular.
\end{lemma}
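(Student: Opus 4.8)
The plan is to establish the two inequalities separately, each by producing an explicit test vector and invoking the Perron--Frobenius description of $\lambda_\alpha(G)$: since $A_\alpha(G)$ is a symmetric nonnegative matrix and $G$ is connected, $A_\alpha(G)$ is irreducible, so its largest eigenvalue $\lambda_\alpha(G)$ coincides with its spectral radius $\rho(A_\alpha(G))$ and admits a positive eigenvector $X$ as in \eqref{AX}.

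For the lower bound I would feed the all-ones vector $\mathbf{1}$ into the Rayleigh quotient. The row sum of $A_\alpha(G)$ at a vertex $v$ is $\alpha d_G(v)+(1-\alpha)d_G(v)=d_G(v)$, so $\mathbf{1}^{T}A_\alpha(G)\mathbf{1}=\sum_{v}d_G(v)=2|E(G)|$ while $\mathbf{1}^{T}\mathbf{1}=|V(G)|$; hence $\lambda_\alpha(G)\ge \mathbf{1}^{T}A_\alpha(G)\mathbf{1}/(\mathbf{1}^{T}\mathbf{1})=2|E(G)|/|V(G)|$. Equality in the Rayleigh quotient forces $\mathbf{1}$ to be an eigenvector for $\lambda_\alpha(G)$, and by positivity it must then be the Perron vector; this occurs exactly when all the row sums $d_G(v)$ are equal, that is, when $G$ is regular, giving the stated equality case.

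For the upper bound, which I expect to be the harder half, I would pass to the similar matrix $B=S^{-1}A_\alpha(G)S$ with $S=\mathrm{diag}(d_G(v_1),\dots,d_G(v_n))$; this is legitimate because $G$ is connected, so every $d_G(v)\ge 1$ and $S$ is invertible. The matrix $B$ is nonnegative with the same spectrum as $A_\alpha(G)$, so $\lambda_\alpha(G)=\rho(A_\alpha(G))=\rho(B)$ is at most the maximum row sum of $B$. The row sum of $B$ at $u$ equals $\alpha d_G(u)+(1-\alpha)\frac{1}{d_G(u)}\sum_{v\in N_G(u)}d_G(v)$, and since $u$ has exactly $d_G(u)$ neighbours, each of degree at most $\max_{v\in N_G(u)}d_G(v)$, the averaged term is at most $\max_{v\in N_G(u)}d_G(v)$. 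Thus each row sum is bounded by $\alpha d_G(u)+(1-\alpha)\max_{v\in N_G(u)}d_G(v)$, and taking the maximum over all $u$ yields $\lambda_\alpha(G)\le\max_{uv\in E(G)}\{\alpha d_G(u)+(1-\alpha)d_G(v)\}$.

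The main obstacle is identifying the right diagonal scaling in the upper bound: a naive choice (for instance the constant scaling $S=I$) only gives the crude estimate $\lambda_\alpha(G)\le\Delta(G)$, since the neighbour-sum then collapses to $d_G(u)$ rather than to a neighbour's degree. The key observation is that weighting each vertex by its own degree is exactly what makes the off-diagonal contribution $\frac{1}{d_G(u)}\sum_{v\in N_G(u)}d_G(v)$ an average of the neighbours' degrees, which is then dominated by the largest neighbour degree. The remaining care is bookkeeping: checking that $S$ is invertible (connectivity), that $B$ is nonnegative (so $\rho(B)$ is controlled by its maximum row sum), and that the similarity preserves the eigenvalue of interest (valid because $A_\alpha(G)$ is symmetric nonnegative, so its spectral radius is its largest eigenvalue $\lambda_\alpha(G)$).
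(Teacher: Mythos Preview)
The paper does not prove this lemma at all; it is quoted verbatim from Nikiforov \cite{17-Nik} as a preliminary result, so there is no in-paper argument to compare against. Your proof is correct on both halves: the Rayleigh quotient with $\mathbf{1}$ gives the lower bound and its equality case, and the diagonal similarity $B=D^{-1}A_\alpha(G)D$ together with the row-sum bound $\rho(B)\le\max_u\sum_v B_{uv}$ yields the upper bound (this is equivalently the Collatz--Wielandt inequality applied with the positive test vector $x_v=d_G(v)$). One small remark: in the equality discussion you do not actually need to invoke Perron--Frobenius---for a real symmetric matrix, equality in the Rayleigh quotient already forces $\mathbf{1}$ to be an eigenvector, hence every row sum $d_G(v)$ equals $\lambda_\alpha(G)$ and $G$ is regular; the converse is immediate since $A_\alpha(G)\mathbf{1}=r\mathbf{1}$ for an $r$-regular $G$.
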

\noindent\begin{lemma}\label{Wang1} \rm(\cite{Wang1})
		Let $G$ be a connected graph of order $n$.   Then
		\begin{itemize}
			\item[{\rm (i)}] $\lambda_\alpha(G)<2$ if and only if $G$ is one of the following graphs.
				\begin{itemize}
				\item[{\rm (a)}] $P_n\,(n\geqslant 1)$ for $\alpha\in [0,1),$
				\item[{\rm (b)}] $T_{1,1,n-3}\,(n\geqslant 4)$ for $\alpha\in [0,s_1),$
				\item[{\rm (c)}] $T_{1,2,2}$ for $\alpha\in [0,s_2),$ $T_{1,2,3}$ for $\alpha\in [0, s_3)$ and $T_{1,2,4}$ for $\alpha\in [0, s_4)$.
	        	\end{itemize}
			\item[{\rm (ii)}] $\lambda_\alpha(G)=2$ if and only if $G$ is one of the following graphs.
				\begin{itemize}
				\item[{\rm (a)}] $C_n\,(n\geqslant 3)$ for $\alpha\in [0,1),$
				\item[{\rm (b)}] $P_n\,(n\geqslant 3)$ for $\alpha=1,$
				\item[{\rm (c)}] $W_n\,(n\geqslant 6)$ for $\alpha=0,$
				\item[{\rm (d)}] $T_{1,1,n-3}$ for $\alpha=s_1,$
				\item[{\rm (e)}] $T_{1,2,2}$ for $\alpha=s_2$, $T_{1,2,3}$ for $\alpha=s_3,$ $T_{1,2,4}$ for $\alpha=s_4,$
				\item[{\rm (f)}] $T_{1,3,3}$ for $\alpha= 0,$ $T_{1,2,5}$ for $\alpha=0,$ $K_{1,4}$ for $\alpha=0,$ and $T_{2,2,2}$ for $\alpha=0,$
			\end{itemize}
		\end{itemize}
		where $s_1=\frac{4}{n+1+\sqrt{(n+1)^2-16}}$, $s_2=0.2192+$ is the solution of $2\alpha^3-11\alpha^2+16\alpha-3=0,$ $s_3=0.1206+$ is the solution of $\alpha^3-6\alpha^2+9\alpha-1=0$ and $s_4=0.0517+$ is the solution of $2\alpha^3-13\alpha^2+20\alpha-1=0.$
	\end{lemma}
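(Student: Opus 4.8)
The plan is to deduce the full $A_\alpha$ statement from the classical Smith classification (the $\alpha=0$ case, in which the connected graphs with adjacency spectral radius strictly below $2$ are exactly the Dynkin diagrams $P_n,\,T_{1,1,n-3},\,T_{1,2,2},\,T_{1,2,3},\,T_{1,2,4}$, and those with adjacency spectral radius equal to $2$ are exactly the extended Dynkin diagrams $C_n,\,W_n,\,T_{2,2,2},\,T_{1,3,3},\,T_{1,2,5},\,K_{1,4}$) by controlling how $\lambda_\alpha(G)$ moves as $\alpha$ grows. The engine is monotonicity in $\alpha$: writing $\lambda_\alpha(G)=\max_{\|X\|=1}\bigl(\alpha X^{\top}D(G)X+(1-\alpha)X^{\top}A(G)X\bigr)$ exhibits $\lambda_\alpha$ as a pointwise maximum of affine functions of $\alpha$ with slopes $X^{\top}(D-A)X=\sum_{uv\in E(G)}(x_u-x_v)^2\ge 0$. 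Hence $\lambda_\alpha(G)$ is convex and non-decreasing in $\alpha$; evaluating the slope at the Perron vector $X_\alpha$ shows it is \emph{strictly} increasing unless $X_\alpha$ is constant, which (since the row sums of $A_\alpha(G)$ are the degrees) happens only when $G$ is regular. In particular $\lambda_\alpha(G)\ge\lambda_0(G)$ for all $\alpha$, so any connected $G$ with $\lambda_\alpha(G)\le 2$ already has $\lambda_0(G)\le 2$ and is therefore one of the Smith graphs. This single inequality collapses the problem from all connected graphs to the finite list of exceptional shapes together with the four infinite families $P_n,\,C_n,\,W_n,\,T_{1,1,n-3}$.

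Next I would dispose of each candidate by tracking $\lambda_\alpha$ across $[0,1]$ using the two endpoints $\lambda_0$ (Smith) and $\lambda_1=\Delta(G)$ (since $A_1=D$), together with the strict monotonicity just established. The regular family $C_n$ is $2$-regular, so Lemma~\ref{le:bound2} pins $\lambda_\alpha(C_n)\equiv 2$ for all $\alpha$. For paths $P_n$ with $n\ge 3$ one has $\lambda_0<2$ and $\lambda_1=2$, so strict monotonicity yields $\lambda_\alpha<2$ on $[0,1)$ with equality exactly at $\alpha=1$ (the degenerate $P_1,P_2$ stay below $2$ throughout). For the Dynkin trees of maximum degree $3$ ($T_{1,1,n-3},T_{1,2,2},T_{1,2,3},T_{1,2,4}$) one has $\lambda_0<2<3=\lambda_1$, so by continuity and strict monotonicity $\lambda_\alpha=2$ at a \emph{unique} threshold $\alpha=s_i\in(0,1)$, with $\lambda_\alpha<2$ below it and $\lambda_\alpha>2$ above. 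Finally, the extended Dynkin diagrams $W_n,T_{2,2,2},T_{1,3,3},T_{1,2,5},K_{1,4}$ are all non-regular with $\lambda_0=2$, so strict monotonicity forces $\lambda_\alpha>2$ for every $\alpha\in(0,1]$; equality $\lambda_\alpha=2$ therefore survives only at $\alpha=0$. These three cases exactly reproduce clauses (i) and (ii).

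To locate the thresholds $s_i$ explicitly I would compute $\lambda_\alpha$ on each relevant graph through its symmetry. By Lemma~\ref{le:g-u=g-v} the Perron vector is constant on the orbits of the automorphism group, so the eigenequation~(\ref{AX}) collapses to an equitable quotient matrix whose order equals the number of orbits, and by Lemma~\ref{equ M} the spectral radius of $A_\alpha(G)$ equals that of this small matrix. Imposing $\lambda_\alpha=2$ in the reduced eigensystem then gives a polynomial condition in $\alpha$: for $T_{1,2,2},T_{1,2,3},T_{1,2,4}$ these are precisely the cubics defining $s_2,s_3,s_4$, while the two-term recursion propagated along the long pendant leg of $T_{1,1,n-3}$ produces the closed form $s_1=\tfrac{4}{\,n+1+\sqrt{(n+1)^2-16}\,}$ (consistent with $s_1=\tfrac12$ at $n=4$, where $T_{1,1,1}=K_{1,3}$ and $\lambda_{1/2}=\tfrac12\lambda_{\max}(Q(K_{1,3}))=2$). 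Strict monotonicity in $\alpha$ guarantees that each such equation has a single admissible root in $(0,1)$, so no spurious thresholds appear.

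The main obstacle is the explicit threshold analysis rather than the structural reduction: one must set up the correct quotient matrix for each spider, verify that the characteristic condition at $\lambda=2$ reduces to the stated cubic (and to the displayed surd for $T_{1,1,n-3}$), and confirm that the admissible root lies in $(0,1)$ and is the unique crossing. A secondary subtlety is the endpoint $\alpha=1$, where the subgraph monotonicity of Lemma~\ref{le:g-uv} is unavailable; there the identity $\lambda_1=\Delta$ settles everything directly, separating the degree-$2$ graphs ($P_n$ with $n\ge 3$, giving $\lambda_1=2$) from the degree-$\ge 3$ trees. If one instead wants a proof independent of Smith's theorem, Lemma~\ref{le:bound1} bounds $\Delta$ (forcing $\Delta\le 3$, with $\Delta=4$ only for $K_{1,4}$), and Lemmas~\ref{le:g-uv} and~\ref{le:subdivide} rule out cycles, second branch vertices, and long internal paths, reducing directly to spiders $T_{a,b,c}$; the family $W_n$ must then be handled by direct quotient-matrix computation, since lengthening its internal path leaves $\lambda_0=2$ unchanged, which is exactly the equality (non-strict) case of the subdivision principle.
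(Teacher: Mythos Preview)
The paper does not prove this lemma at all: it is stated with the citation \cite{Wang1} and used as a black box in Section~3, so there is no ``paper's own proof'' to compare against. Your proposal therefore cannot be matched to anything in the present paper.

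That said, as a self-contained argument your sketch is sound and is essentially the route taken in the cited source. The monotonicity engine is correct: $\lambda_\alpha(G)=\max_{\|X\|=1}X^{\top}A_\alpha(G)X$ is a pointwise maximum of affine functions of $\alpha$ with slope $X^{\top}L(G)X\ge 0$, hence non-decreasing, and strictly increasing unless the Perron vector is constant, i.e.\ unless $G$ is regular. This immediately forces any $G$ with $\lambda_\alpha(G)\le 2$ to satisfy $\lambda_0(G)\le 2$, reducing to the Smith list, after which your case-by-case endpoint analysis ($\lambda_0$ versus $\lambda_1=\Delta$) together with the intermediate value theorem pins down each family exactly as in the statement. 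The only places that need genuine computation are the explicit thresholds $s_1,\dots,s_4$, and your plan to obtain them via the equitable quotient matrices (Lemmas~\ref{equ M} and~\ref{le:g-u=g-v}) and, for $T_{1,1,n-3}$, via the two-term recursion along the long arm, is the standard way to do it. One small caution: your claim that $\lambda_\alpha$ is \emph{strictly} increasing needs the Perron vector to be non-constant at the specific $\alpha$ under consideration, not just at $\alpha=0$; but since a constant vector is an $A_\alpha$-eigenvector only for regular $G$ (for every $\alpha$), this holds uniformly and the argument goes through.
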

\section{
 The graphs with the maximum or the minimum $A_\alpha$ spectral radii  among $\mathcal{G}_{n,n-4}$ }
\label{sec:ch-sufficient}
\textwidth 150mm \textheight 225mm
In this section, we character the graphs  having the maximum and the minimum $A_\alpha$ spectral radius among $\mathcal{G}_{n,n-4}$, respectively.
Firstly, for completeness, by using Lemma \ref{le:SWT3}, we obtain the following Theorem \ref{co:3.1} immediately.
\noindent\begin{lemma}\label{le:SWT3} \rm(\cite{SWT-LSC})
		Let $G$ be a graph  in $\mathcal{G}_{n,i}$. If $\alpha\in[0,1)$, then $\lambda_\alpha(G)\leq \lambda_\alpha(K_{i}^{c}\vee K_{n-i})$ with equality if and only if $G\cong K_{i}^{c}\vee K_{n-i}$.
\end{lemma}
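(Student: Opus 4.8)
The plan is to prove the bound by exhibiting $H := K_i^c \vee K_{n-i}$ as a universal ``container'' for the members of $\mathcal{G}_{n,i}$ and then quoting the strict monotonicity of $\lambda_\alpha$ under passing to a proper subgraph (Lemma \ref{le:g-uv}). Concretely, given $G \in \mathcal{G}_{n,i}$, I would first fix a maximum independent set $S \subseteq V(G)$ with $|S| = i$ and put $T = V(G) \setminus S$, so that $|T| = n - i$. Identifying $S$ with the vertex set of the $K_i^c$ factor of $H$ and $T$ with the vertex set of the $K_{n-i}$ factor, the key point is that $H$ already contains every edge that $G$ could possibly have: since $S$ is independent, $G$ has no edge inside $S$, so each edge of $G$ lies either inside $T$ or between $S$ and $T$, and $H$ carries all such edges. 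Hence $E(G) \subseteq E(H)$ on the common vertex set, i.e. $G$ is a spanning subgraph of $H$.

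Next I would record the two facts about $H$ that make the reduction legitimate. First, $H$ is connected (for $1 \le i \le n-1$), since each of the $n-i$ clique vertices is joined to all $i$ independent vertices. Second, $i(H) = i$: an independent set of $H$ meets the clique $K_{n-i}$ in at most one vertex, and such a vertex is adjacent to all of $S$, so any independent set is either contained in $S$ (size $\le i$) or is a single clique vertex (size $1$); thus $H \in \mathcal{G}_{n,i}$. With these in hand the conclusion is immediate: if $G \cong H$ the stated equality holds, and otherwise $G$ is a proper subgraph of the connected graph $H$, whence Lemma \ref{le:g-uv} gives $\lambda_\alpha(G) < \lambda_\alpha(H)$ for all $\alpha \in [0,1)$.

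The work here lies almost entirely in the structural observation rather than in any estimate, so there is no heavy computation to grind through; the only genuine content is realizing that a maximum independent set forces the spanning-subgraph containment $G \subseteq H$. The step I would treat as the main obstacle is verifying that $H$ itself still has independence number exactly $i$ (so that $H$ is an admissible competitor in $\mathcal{G}_{n,i}$ and the inequality is not vacuous), together with dispatching the degenerate boundary cases; for instance $i = 1$ forces $G = K_n = H$, so equality is the only possibility, while the standing assumption that $G$ is connected rules out $i = n$. Once $i(H) = i$ and the connectedness of $H$ are confirmed, the strictness built into Lemma \ref{le:g-uv} delivers the ``equality if and only if $G \cong K_i^c \vee K_{n-i}$'' clause with no further effort.
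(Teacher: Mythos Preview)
Your argument is correct and is the standard one. Note, however, that the paper does not give its own proof of this lemma: it is quoted verbatim from reference~\cite{SWT-LSC}, so there is nothing in the present paper to compare against beyond confirming that your spanning-subgraph containment $G\subseteq K_i^c\vee K_{n-i}$ together with Lemma~\ref{le:g-uv} indeed yields the statement.
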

\noindent\begin{theorem}\label{co:3.1} Let G be a graph in $\mathcal{G}_{n,n-4}$ having the maximum $A_\alpha$  spectral radius. Then $G\cong K_{n-4}^{c}\vee K_{4}$ for $\alpha \in [0,1)$.
\end{theorem}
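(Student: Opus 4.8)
The plan is to obtain this statement as an immediate specialization of Lemma~\ref{le:SWT3}. Since a graph in $\mathcal{G}_{n,n-4}$ is precisely a graph in $\mathcal{G}_{n,i}$ with $i=n-4$, substituting $i=n-4$ into Lemma~\ref{le:SWT3} yields $\lambda_\alpha(G)\leq\lambda_\alpha(K_{n-4}^{c}\vee K_{4})$ for every $G\in\mathcal{G}_{n,n-4}$ and every $\alpha\in[0,1)$, with equality if and only if $G\cong K_{n-4}^{c}\vee K_{4}$. Hence the graph attaining the maximum $A_\alpha$ spectral radius is exactly $K_{n-4}^{c}\vee K_{4}$, which is the assertion of the theorem. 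The only point to check is that the hypothesis is meaningful, i.e.\ that $K_{n-4}^{c}\vee K_{4}$ actually belongs to $\mathcal{G}_{n,n-4}$: its $n-4$ pairwise non-adjacent vertices form an independent set, while every vertex of the $K_{4}$ is adjacent to all remaining vertices, so the independence number equals $n-4$.

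Although this reduction makes the proof a single line, it is worth recording the mechanism powering Lemma~\ref{le:SWT3}, since that is where the real content sits. Given any $G\in\mathcal{G}_{n,n-4}$, I would fix a maximum independent set $S$ with $|S|=n-4$ and set $W=V(G)\setminus S$, so $|W|=4$. Forming $G^{\ast}$ by inserting every missing edge inside $W$ together with every missing edge between $S$ and $W$ produces exactly $K_{n-4}^{c}\vee K_{4}$. Because no edge is added within $S$, the set $S$ remains independent, so $i(G^{\ast})\geq n-4$; and in $K_{n-4}^{c}\vee K_{4}$ an independent set can contain at most one vertex of the $K_{4}$, which is adjacent to all of $S$, so in fact $i(G^{\ast})=n-4$ and $G^{\ast}\in\mathcal{G}_{n,n-4}$.

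Finally, since $G$ is a subgraph of $G^{\ast}$, Lemma~\ref{le:g-uv} gives $\lambda_\alpha(G)<\lambda_\alpha(G^{\ast})=\lambda_\alpha(K_{n-4}^{c}\vee K_{4})$ for $\alpha\in[0,1)$ whenever $G\not\cong G^{\ast}$, and $\lambda_\alpha(G)=\lambda_\alpha(K_{n-4}^{c}\vee K_{4})$ exactly when $G\cong K_{n-4}^{c}\vee K_{4}$. There is no genuine obstacle here: the whole argument is the observation that the densest graph with a prescribed independence number is a join, combined with the strict monotonicity of $\lambda_\alpha$ under adding edges. The only care required is the bookkeeping that adding all these edges does not drop the independence number below $n-4$, which is immediate from the structure of the join.
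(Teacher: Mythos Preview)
Your proof is correct and follows exactly the paper's approach: the paper derives Theorem~\ref{co:3.1} immediately from Lemma~\ref{le:SWT3} by specializing to $i=n-4$, which is precisely what you do in your first paragraph. The additional two paragraphs, in which you verify that $K_{n-4}^{c}\vee K_{4}\in\mathcal{G}_{n,n-4}$ and sketch the edge-addition argument underlying Lemma~\ref{le:SWT3} via Lemma~\ref{le:g-uv}, are accurate and harmless elaborations that the paper simply omits.
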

 Secondly, by using  Lemmas \ref{equ M}, \ref{le:g-uv}, \ref{le:bound2} and \ref{le:SWT1}, we characterize the graphs in $\mathcal{G}_{n,n-4}$ having the minimum $A_\alpha$ spectral radius with $5\leq n\leq 10$ for  $\alpha \in [0,1)$ in the Theorem \ref{co:3.2}.
\begin{lemma}\label{le:SWT1} \rm(\cite{SWT-LSC})
		Let $G$ be a graph in $\mathcal{G}_{n,i}$ having the minimum $A_\alpha$ spectral radius for $\alpha\in [0,1)$.
		\begin{itemize}
			\item[(i)] If $i=1$, then $G\cong K_n$.
			\item[(ii)] If $i=\lfloor\frac{n}{2}\rfloor$, then $G\cong P_n$ for even $n$ and $G\cong C_n$ for odd $n$.
			\item[(iii)] If $i=\lceil\frac{n}{2}\rceil$, then $G\cong P_n$.
			\item[(iv)] If $i=\lfloor\frac{n}{2}\rfloor+1$ with $n\geq 9$, then $G\cong P_n$ for odd $n$ and $G\cong T_{1,1,n-3}$ for even $n$.
			\item[(v)] If $i=n-1$, then $G\cong K_{1,n-1}$.
		\end{itemize}
	\end{lemma}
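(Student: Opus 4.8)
The plan is to split the five parts according to how strongly the independence constraint determines the graph, and to let the classification in Lemma \ref{Wang1} together with the monotonicity Lemmas \ref{le:g-uv} and \ref{le:subdivide} do most of the work. Cases (i) and (v) are rigid: a connected graph with $i=1$ has edgeless complement, hence is $K_n$, and a connected graph with $i=n-1$ has a single vertex outside a maximum independent set which must be joined to all others, hence is $K_{1,n-1}$; in both situations the class is a singleton and the conclusion is immediate.

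For the remaining parts I would first check that each proposed graph lies in its class, namely $i(P_n)=\lceil n/2\rceil$, $i(C_n)=\lfloor n/2\rfloor$, and (by a short count over the three legs) $i(T_{1,1,n-3})=\lfloor n/2\rfloor+1$. The central tool is that $P_n$ is the unique connected $n$-vertex graph of minimum $A_\alpha$ spectral radius for $\alpha\in[0,1)$: by Lemma \ref{le:g-uv} a global minimizer cannot contain a cycle (delete a cycle edge), so it is a tree, and by Lemma \ref{Wang1}(i) the only trees with $\lambda_\alpha<2$ are $P_n$ together with finitely many small spiders, so a finite comparison (or grafting via Lemmas \ref{le:3.2} and \ref{le:g-uw+vw}) shows $P_n$ wins, every other connected graph having $\lambda_\alpha\ge 2>\lambda_\alpha(P_n)$. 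Since $i(P_n)=\lceil n/2\rceil$, this settles (iii) at once; it also settles (ii) for even $n$ and (iv) for odd $n$, because there $\lfloor n/2\rfloor=\lceil n/2\rceil$, respectively $\lfloor n/2\rfloor+1=\lceil n/2\rceil$, so $P_n$ itself lies in the prescribed class and is therefore optimal.

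Part (ii) for odd $n$ is different because $P_n\notin\mathcal{G}_{n,(n-1)/2}$; in fact every tree on $n$ vertices is bipartite, hence has independence number at least $\lceil n/2\rceil=(n+1)/2>(n-1)/2$, so $\mathcal{G}_{n,(n-1)/2}$ contains \emph{no} tree and any member carries a cycle. By Lemma \ref{Wang1}(i) every connected graph with $\lambda_\alpha<2$ is a tree, so each graph in this class satisfies $\lambda_\alpha\ge 2$, and $\lambda_\alpha(C_n)=2$ attains the bound. Uniqueness follows from Lemma \ref{Wang1}(ii): apart from $C_n$, every connected graph with $\lambda_\alpha=2$ is either the path $P_n$ at $\alpha=1$ (outside our range) or a tree, hence again has independence number at least $\lceil n/2\rceil>(n-1)/2$ and is excluded; so $C_n$ is the unique minimizer.

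The main obstacle is part (iv) for even $n\ge 10$, where the candidate $T_{1,1,n-3}$ satisfies $\lambda_\alpha<2$ only for $\alpha\in[0,s_1)$. On that range Lemma \ref{Wang1}(i) again isolates it (the competing spiders have at most $8$ vertices, hence are irrelevant for $n\ge 10$, and $P_n$ has the wrong independence number $n/2$), so $T_{1,1,n-3}$ is the unique minimizer. For $\alpha\in(s_1,1)$, however, $\lambda_\alpha(T_{1,1,n-3})>2$ and Lemma \ref{Wang1} no longer pins it down, so the constraint $i=n/2+1$ must be attacked directly. The delicate point is that deleting a cycle edge lowers $\lambda_\alpha$ by Lemma \ref{le:g-uv} but may raise the independence number and thus leave the class; I would therefore first establish, as an $A_\alpha$-analogue of the tree-minimization result quoted in the introduction, that a minimizer over $\mathcal{G}_{n,i}$ with $i\ge\lfloor n/2\rfloor$ is a tree, reducing to trees with exactly $n/2+1$ independent vertices. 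Among such trees—one above the bipartite minimum $n/2$—I would use the transfer Lemmas \ref{le:3.2} and \ref{le:g-uw+vw} to concentrate all branching at a single major vertex and then Lemma \ref{le:subdivide} to lengthen the long leg, forcing the spider $T_{1,1,n-3}$; verifying that each transformation preserves the value $i=n/2+1$ while strictly decreasing $\lambda_\alpha$ is the step I expect to be the most laborious.
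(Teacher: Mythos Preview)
The paper does not prove this lemma at all: it is quoted verbatim from Sun, Yan, Li, and Li~\cite{SWT-LSC} and used as a black box. So there is no ``paper's own proof'' to compare against; your proposal goes well beyond what the present paper does.

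As a self-contained argument, your treatment of (i), (v), (ii) for odd $n$, (iii), and the easy halves of (ii) and (iv) is sound. One minor point: to conclude that $P_n$ is the unique global $A_\alpha$-minimizer you still have to compare $P_n$ with $T_{1,1,n-3}$ for $\alpha\in[0,s_1)$, since Lemma~\ref{Wang1}(i) puts \emph{both} below $2$ for every $n\ge4$; a single Kelmans-type move (Lemma~\ref{le:g-uw+vw}) from the degree-$3$ vertex to the far end of the long leg does this, but it deserves an explicit sentence rather than ``finite comparison''.

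The genuine gap is exactly where you flag it: part (iv) for even $n$ and $\alpha\in[s_1,1)$. Two of the steps you propose are themselves substantial results. First, the claim that a minimizer in $\mathcal{G}_{n,i}$ with $i\ge\lfloor n/2\rfloor$ must be a tree is, for $A_\alpha$, not available from the lemmas in this paper; the adjacency version is one of the main results of Lou and Guo~\cite{22-DM}, and its $A_\alpha$-analogue would need a separate argument (edge deletion via Lemma~\ref{le:g-uv} lowers $\lambda_\alpha$ but can increase the independence number, so one must control both simultaneously). Second, even after reducing to trees with $i=n/2+1$, the grafting/subdivision moves you invoke (Lemmas~\ref{le:3.2}, \ref{le:g-uw+vw}, \ref{le:subdivide}) do not automatically preserve the exact value of the independence number, and checking this case by case is precisely the ``laborious'' part you defer. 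In short, your outline is plausible, but for $\alpha\ge s_1$ it is a program rather than a proof, and the honest route here is the one the paper takes: cite~\cite{SWT-LSC}.
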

\begin{theorem}\label{co:3.2}
	Let $G$ be a graph in $\mathcal{G}_{n,n-4}$  having the minimum $A_\alpha$ spectral radius with $5\leq n\leq 10$ for  $\alpha \in [0,1)$. Let $F_{s,t}\cong (K_{s,t}-e)^c$ where $e=uv$, $u\in V(K_s)$ and $v\in V(K_t)$.
	\begin{itemize}
		\item[\rm(i)] If $n=5$, then $G\cong K_5$ for $\alpha\in[0,1)$.
		\item[\rm(ii)] If $n=6$, then $G\cong F_{3,3}$ for $\alpha\in[0,\frac{7}{9}]$.
		\item[\rm(iii)] If $n=7$, then $G\cong C_7$ for $\alpha\in[0,1)$.
		\item[\rm(iv)] If $n=8,\ 9$, then $G\cong P_n$ for $\alpha\in[0,1)$.
		\item[\rm(v)] If $n=10$, then $G\cong T_{1,1,7}$ for $ \alpha\in[0,1)$.
	\end{itemize}
\end{theorem}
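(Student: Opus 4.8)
The plan is to treat the five ranges of $n$ separately and to exploit a fortunate coincidence: for every $n\in\{5,7,8,9,10\}$ the target independence number $n-4$ equals one of the special values already settled by Lemma~\ref{le:SWT1}. Indeed, for $n=5$ we have $n-4=1$, so part (i) gives $K_{5}$; for $n=7$ we have $n-4=3=\lfloor 7/2\rfloor$, so part (ii) with $n$ odd gives $C_{7}$; for $n=8$ and $n=9$ we have $n-4=\lceil n/2\rceil$, so part (iii) gives $P_{8}$ and $P_{9}$; and for $n=10$ we have $n-4=6=\lfloor 10/2\rfloor+1$ with $n\ge 9$ even, so part (iv) gives $T_{1,1,7}$. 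Hence parts (i), (iii), (iv) and (v) require no new argument beyond quoting Lemma~\ref{le:SWT1} with the matching above.

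All the genuine work is in part (ii), namely $n=6$, $i=2$, which is not covered by Lemma~\ref{le:SWT1}. First I would pass to complements: a connected graph $G$ lies in $\mathcal{G}_{6,2}$ exactly when $G\neq K_{6}$ and $G^{c}$ is triangle-free. By Mantel's theorem a triangle-free graph on $6$ vertices has at most $9$ edges, so $|E(G)|\ge 6$; moreover $|E(G)|=6$ forces $G^{c}=K_{3,3}$, i.e.\ $G=K_{3}\cup K_{3}$, which is disconnected. Thus every connected $G\in\mathcal{G}_{6,2}$ satisfies $|E(G)|\ge 7$. The candidate $F_{3,3}$ is precisely two triangles joined by one bridge edge; it has $6$ vertices, $7$ edges, maximum degree $3$ and independence number $2$, so $F_{3,3}\in\mathcal{G}_{6,2}$.

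Next I would compute $\lambda_\alpha(F_{3,3})$ exactly. The automorphism exchanging the two triangles makes the partition of $V(F_{3,3})$ into the two bridge endpoints (each of degree $3$) and the four remaining vertices (each of degree $2$) equitable, so Lemma~\ref{equ M} reduces $\lambda_\alpha(F_{3,3})$ to the spectral radius of the $2\times 2$ quotient matrix, giving $\lambda_\alpha(F_{3,3})=\frac{1}{2}\left(3\alpha+2+\sqrt{9\alpha^{2}-16\alpha+8}\right)$; a short estimate shows this is $\le \frac{8}{3}$ if and only if $\alpha\le\frac{7}{9}$, which is exactly where the threshold originates. To conclude I would compare $F_{3,3}$ with every other connected $G\in\mathcal{G}_{6,2}$. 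If $|E(G)|=7$, then $G^{c}$ is triangle-free with $8$ edges, and the only such graphs are $K_{3,3}-e$ and $K_{2,4}$ (both bipartite, since a non-bipartite triangle-free graph on $6$ vertices has at most $7$ edges); their complements are $F_{3,3}$ (connected) and $K_{4}\cup K_{2}$ (disconnected), so $G\cong F_{3,3}$ is forced. If $|E(G)|\ge 8$, then Lemma~\ref{le:bound2} gives $\lambda_\alpha(G)\ge 2|E(G)|/6\ge \frac{8}{3}$, and this is strict since a graph on $6$ vertices with $8$ edges is not regular while any graph with at least $9$ edges already satisfies $2|E(G)|/6\ge 3>\frac{8}{3}$; combined with $\lambda_\alpha(F_{3,3})\le\frac{8}{3}$ on $[0,\frac{7}{9}]$ this yields $\lambda_\alpha(G)>\lambda_\alpha(F_{3,3})$. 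Thus $F_{3,3}$ is the unique minimizer for $\alpha\in[0,\frac{7}{9}]$. (Lemma~\ref{le:g-uv} can alternatively be invoked to say the minimizer must be edge-minimal, but the edge-count bound already disposes of all denser graphs.)

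The main obstacle is confined to part (ii). The delicate points are the completeness of the classification of connected members of $\mathcal{G}_{6,2}$ with $7$ edges—one must rule out $K_{2,4}$ and every non-bipartite possibility so that $F_{3,3}$ is genuinely unique—and the calibration of the edge bound of Lemma~\ref{le:bound2} against the exact value of $\lambda_\alpha(F_{3,3})$, which is what fixes the crossover at $\alpha=7/9$. The remaining four values of $n$ are immediate corollaries of Lemma~\ref{le:SWT1}.
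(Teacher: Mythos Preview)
Your proposal is correct and follows the same overall strategy as the paper: the cases $n\in\{5,7,8,9,10\}$ are read off directly from Lemma~\ref{le:SWT1} via exactly the matching you describe, and the case $n=6$ is handled by computing $\lambda_\alpha(F_{3,3})$ through its $2\times 2$ equitable quotient matrix and comparing against the average-degree lower bound of Lemma~\ref{le:bound2}, with the threshold value $\tfrac{8}{3}$ producing the cutoff $\alpha=\tfrac{7}{9}$.

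For $n=6$ your route is somewhat different from, and in fact more complete than, the paper's. The paper simply asserts that the minimizer is one of $F_{1,5}$, $F_{2,4}$, $F_{3,3}$, discards $F_{1,5}$ via Lemma~\ref{le:g-uv} (since $F_{2,4}$ is a proper subgraph of $F_{1,5}$), and then compares only $F_{2,4}$ with $F_{3,3}$ using $\lambda_\alpha(F_{2,4})>\tfrac{8}{3}$. You instead classify \emph{all} connected members of $\mathcal{G}_{6,2}$ by passing to complements and invoking Mantel's theorem: you show that $F_{3,3}$ is the unique such graph with $7$ edges (the complements with $8$ edges being $K_{3,3}-e$ and $K_{2,4}$, the latter giving a disconnected complement), and that any competitor has $\ge 8$ edges and hence $\lambda_\alpha>\tfrac{8}{3}$ strictly. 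This supplies a self-contained justification for restricting attention to $F_{3,3}$, a step the paper leaves unexplained. The actual computations---the quotient matrix $\left[\begin{smallmatrix}2\alpha+1&2(1-\alpha)\\1-\alpha&\alpha+1\end{smallmatrix}\right]$, the characteristic polynomial $x^{2}-(3\alpha+2)x+7\alpha-1$, and the evaluation at $x=\tfrac{8}{3}$---coincide with the paper's.
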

	\begin{proof}	
If $n=5,7,8,9,10$, by using Lemma \ref{le:SWT1}, then we obtain the results of (i), (iii)--(v) immediately.

  If $n=6$, then $G\cong F_{1,5}$, $F_{2,4}$ or $F_{3,3}$.
 Since $F_{2,4}$ is a proper subgraph of $F_{1,5}$,  by using Lemma \ref{le:g-uv}, then $\lambda_{\alpha}(F_{2,4})\leq \lambda_{\alpha}(F_{1,5})$.

  Notice that the partition $V(F_{3,3})=\{u,v\}\cup (V(F_{3,3})\setminus\{u,v\})$ is an equitable partition with an equitable quotient matrix
	$$B(F_{3,3})=
\left [
\begin{matrix}
2\alpha+1 & 2(1-\alpha) \\
1-\alpha & \alpha+1
\end{matrix}
\right  ].
$$
The characteristic polynomial of $B(F_{3,3})$ is
$$f(\alpha,x)=x^2- (3\alpha +2)x + 7\alpha - 1.$$
 Together with Lemma \ref{equ M}, $\lambda_{\alpha}(F_{3,3})$ is the largest root of $f(\alpha,x)$.

 By using Lemma \ref{le:bound2}, $\lambda_{\alpha}(F_{2,4})> \frac{8}{3}$. Obviously,  for  $x>\frac{5}{2}\geq\frac{3\alpha+2}{2}$, $f(\alpha,x)$ is an increasing function. Since
 $$f(\alpha,\frac{8}{3})=-\alpha+\frac{7}{9}>0,\ for\ \alpha\in[0,\frac{7}{9}].$$

 That is
 $$\lambda_{\alpha}(F_{3,3})<\frac{8}{3}<\lambda_{\alpha}(F_{2,4}),\ for\ \alpha\in[0,\frac{7}{9}].$$

 It completes the proof.
\end{proof}

Thirdly, we characterize the graphs in $\mathcal{G}_{n,n-4}$ $(n\geq 11)$ having the minimum $A_\alpha$ spectral radius in Theorem \ref{th:main}. In order to prove Theorem \ref{th:main}, we we shall give some crucial results (Lemmas  \ref{th:ch-10}--\ref{le:3.8}).

\noindent\begin{lemma}\label{th:ch-10}
	Let $G$ be the graph in $\mathcal{G}_{n,n-4}$  with $n\geq 11$ having the minimum $A_\alpha$ spectral radius. Then $G\cong G_{12}(m_1,m_2,m_3,m_4)$ or $G_{13}(m_1,m_2,m_3,m_4)$ in Figure \ref{fig1} for $\alpha\in[0,1)$.	
\end{lemma}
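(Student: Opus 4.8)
The plan is to combine the vertex-cover description of graphs with independence number $n-4$ with the edge-deleting, edge-shifting and subdivision operations assembled in Section~2, reducing the extremal graph step by step to the two advertised families. Since $i(G)=n-4$, a maximum independent set $I$ has $|I|=n-4$ and its complement $S=\{u_1,u_2,u_3,u_4\}$ is a minimum vertex cover; thus every edge meets $S$ and no two vertices of $I$ are adjacent. Consequently each vertex of $I$ has all of its neighbours inside $S$, so it is a pendant leaf at some $u_i$ or a vertex of degree $\ge 2$ whose neighbours are cover vertices; since no two vertices of $I$ are adjacent, any $I$-vertex linking two cover vertices does so as a single length-two connector. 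I would also record that minimality of the cover forces every $u_i$ to have a neighbour in $I$: a cover vertex adjacent only to other cover vertices would be redundant, giving $\tau(G)\le 3$ and hence $i(G)\ge n-3$.

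Next I would argue that the extremal $G$ is a tree. By Lemma~\ref{le:g-uv}, if an edge $e$ can be removed so that $G-e$ stays connected and still lies in $\mathcal{G}_{n,n-4}$, then $\lambda_\alpha(G-e)<\lambda_\alpha(G)$ contradicts minimality, so $G$ is edge-minimal subject to connectivity and the independence-number constraint. The subtle point is that deleting a cycle edge can only raise the independence number, so one must show that for $n\ge 11$ a cyclic graph in $\mathcal{G}_{n,n-4}$ always has a cycle edge whose deletion keeps $i=n-4$; this is false for small orders (it is precisely why $C_7$ survives in Theorem~\ref{co:3.2}), so the argument must exploit that $n$ is large. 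Granting acyclicity, the cover vertices together with the degree-$\ge 2$ vertices of $I$ form a subtree of bounded size --- the ``core'' --- to which pendant leaves are attached, and there are only finitely many admissible core shapes on four cover vertices. These give rise to the two families $G_{12}$ and $G_{13}$ of Figure~\ref{fig1}.

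With $G$ reduced to a core with pendant leaves, I would normalise the attachments. Lemma~\ref{le:g-u=g-v} equates the Perron weights of symmetric vertices, and Lemma~\ref{le:g-uw+vw} lets me reassign leaves between cover vertices according to their eigenvector weights, while the pendant-balancing Lemma~\ref{le:3.2} shows that a balanced split of pendants is the one that minimises $\lambda_\alpha$; together these force the leaves to be distributed as evenly as the core allows, producing the parameters $m_1,m_2,m_3,m_4$ and the relation $n=m_1+m_2+m_3+m_4+c$ for the appropriate constant $c$. Whether a backbone adjacency is realised as a direct edge or as a subdivided length-two connector is governed by Lemma~\ref{le:subdivide}: subdividing an internal-path edge strictly lowers $\lambda_\alpha$ but costs a vertex from the leaf budget, so the optimal number of connectors must be fixed by direct comparison. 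Finally, each surviving candidate has an equitable partition into cover vertices, connectors and leaves, so Lemma~\ref{equ M} expresses $\lambda_\alpha$ as the largest root of a small quotient matrix; comparing these characteristic polynomials, with Lemmas~\ref{le:bound1} and~\ref{le:bound2} supplying the numerical estimates needed, eliminates all candidates except $G_{12}$ and $G_{13}$.

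I expect two genuine obstacles. The first is the acyclicity step: controlling the independence number under edge deletion is not automatic, as $C_7$ shows, so one needs an argument valid for all $n\ge 11$ that a cycle can be broken without enlarging a maximum independent set, or else a direct spectral comparison of a cyclic graph against an explicit tree in $\mathcal{G}_{n,n-4}$. The second is the final, essentially finite, optimisation: after balancing one is left with only a few core/connector configurations, but deciding the exact number of connectors and ruling out the near-optimal competitors requires careful manipulation of the quotient-matrix characteristic polynomials uniformly in $\alpha\in[0,1)$, and this is where most of the computational effort will lie.
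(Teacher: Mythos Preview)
Your proposal conflates the content of this lemma with that of Theorem~\ref{th:main}. Lemma~\ref{th:ch-10} asserts only that the minimizer has the \emph{shape} $G_{12}$ or $G_{13}$ for \emph{some} parameters $m_1,\dots,m_4$; it makes no claim about which parameters occur. The balancing of pendants via Lemma~\ref{le:3.2}, the eigenvector-weight comparisons, and the quotient-matrix characteristic-polynomial analysis in your third paragraph all belong to Lemmas~\ref{le:ch-11}--\ref{le:3.8} and Theorem~\ref{th:main}, not here.

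The actual work of this lemma, which your outline glosses over, is the enumeration and elimination of competing tree shapes. Once $G$ is a tree, the induced subgraph $H=G[\{u_1,u_2,u_3,u_4\}]$ is one of the six forests $P_4$, $T_{1,1,1}$, $K_1\cup P_3$, $P_2\cup P_2$, $2K_1\cup P_2$, $4K_1$, and tracking how the vertices of $I$ connect the components of $H$ and hang as leaves off the $u_i$ yields thirteen families $G_1,\dots,G_{13}$, not two. The paper then eliminates $G_1,\dots,G_{11}$ case by case, and the key device is not characteristic-polynomial comparison but a purely structural one: in most cases one locates an internal-path edge in $G_i$, subdivides it via Lemma~\ref{le:subdivide} to obtain $G_i^{(1)}$ with strictly smaller $\lambda_\alpha$, and then observes that some member of a later family $G_k$ (with one fewer pendant leaf) is a proper subgraph of $G_i^{(1)}$, so Lemma~\ref{le:g-uv} gives $\lambda_\alpha(G_i)>\lambda_\alpha(G_i^{(1)})>\lambda_\alpha(G_k)$. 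The two families $G_{10}$ and $G_{11}$ lack a suitable internal edge and are handled instead by the edge-shift Lemma~\ref{le:g-uw+vw}. No quotient matrices or polynomial identities are used at this stage; your proposed route via Lemma~\ref{equ M} would in principle work, but would mean comparing thirteen families, each with four free integer parameters, uniformly in $\alpha$, which is far heavier than the paper's one-line transformations per case.

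You are right to flag the acyclicity step for $n\ge 11$: the paper dispatches it in half a sentence citing Lemma~\ref{le:g-uv}, and a careful treatment would need to explain why a cycle edge can always be removed without pushing $i(G)$ above $n-4$.
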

\begin{proof}
Let $S=\{v_1,v_2,v_3,...,v_{n-4}\}$ be the maximum independent set of $G$ and  $H$ be a subgraph of $G$ with $V(H)=\{u_1,u_2,u_3,u_4\}=V(G)\setminus S$. If $n\geq 11$, by using Lemma \ref{le:g-uv}, then $G$ must be a tree, and $H$ is the one graph of  $P_4,\ T_{1,1,1},\ K_1\cup P_3,\ P_2\cup P_2,\ K_1\cup K_1\cup P_2,$ or $K_1\cup K_1\cup K_1\cup K_1$. Therefore, we shall obtain thirteen classes of  trees in $\mathcal{G}_{n,n-4}$ corresponding to the above six structures of the subgraph $H$ in Figure \ref{fig1},  denoted by $G_i(m_1,m_2,m_3,m_4)$ or simply $G_i$,  $i=1,2,\ldots,13$, where $\min\{m_1,m_2,m_3,m_4\}\geq 0$.

\begin{figure}[!h]
\centering
\includegraphics[width=24mm]{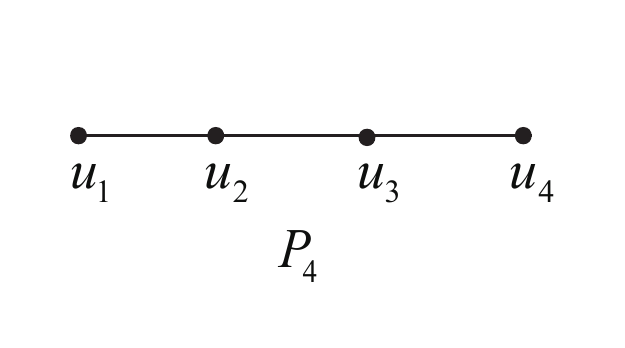}
 \includegraphics[width=24mm]{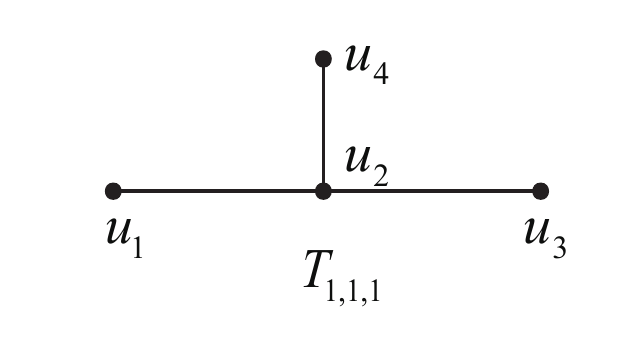}
 \includegraphics[width=24mm]{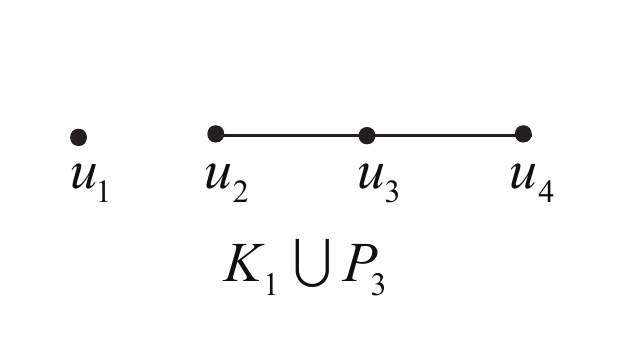}
\includegraphics[width=24mm]{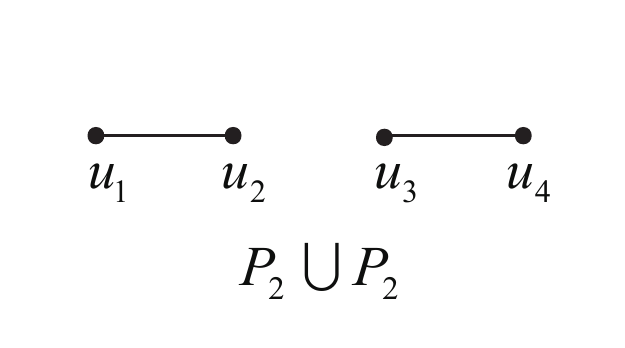}
\includegraphics[width=24mm]{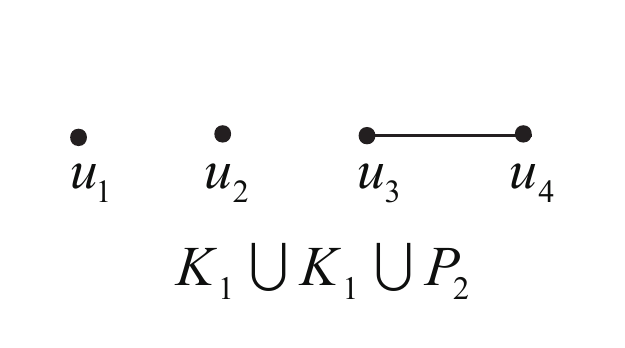}
\includegraphics[width=24mm]{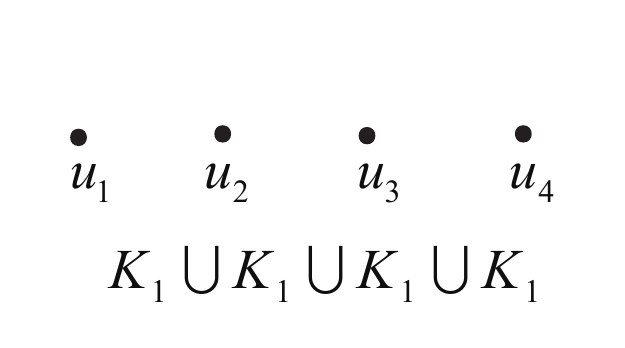}\\
\includegraphics[width=36mm]{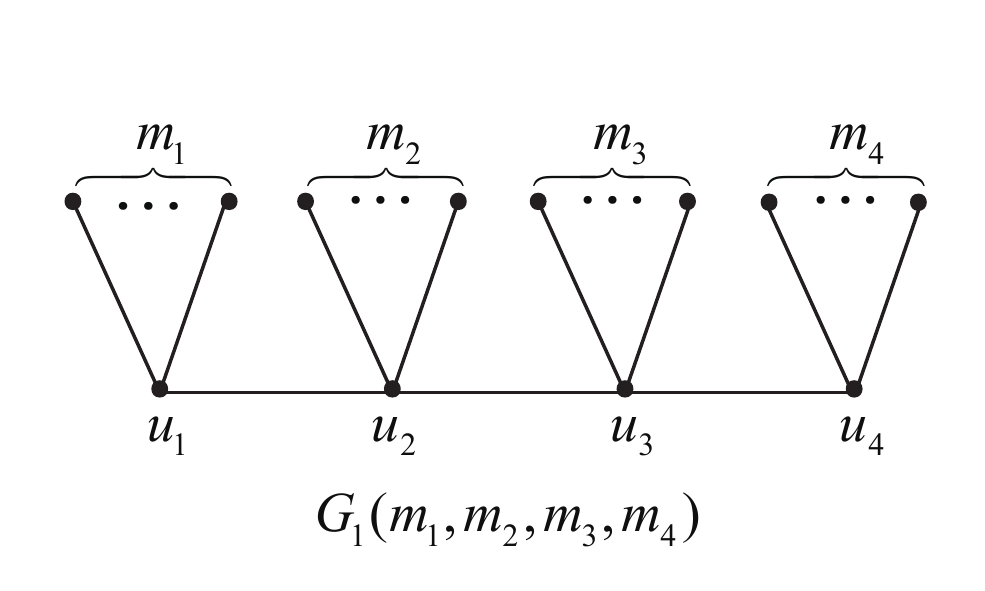}
\includegraphics[width=36mm]{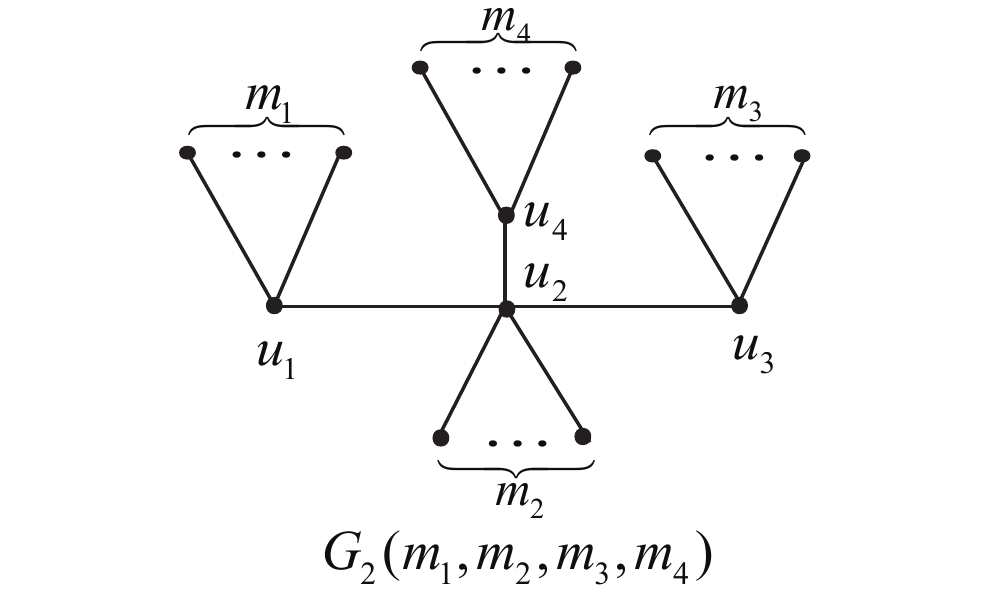}
\includegraphics[width=36mm]{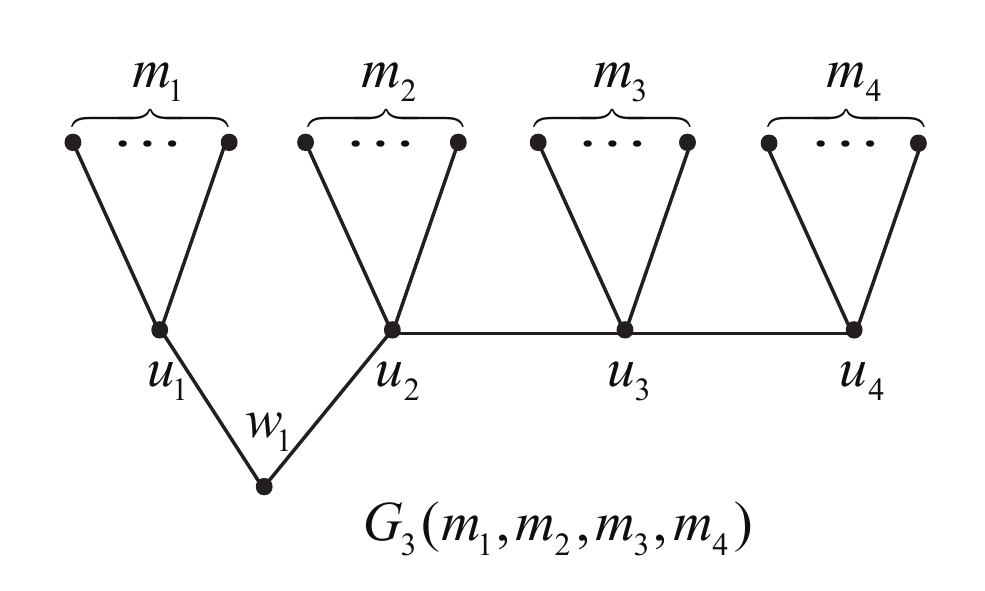}
\includegraphics[width=36mm]{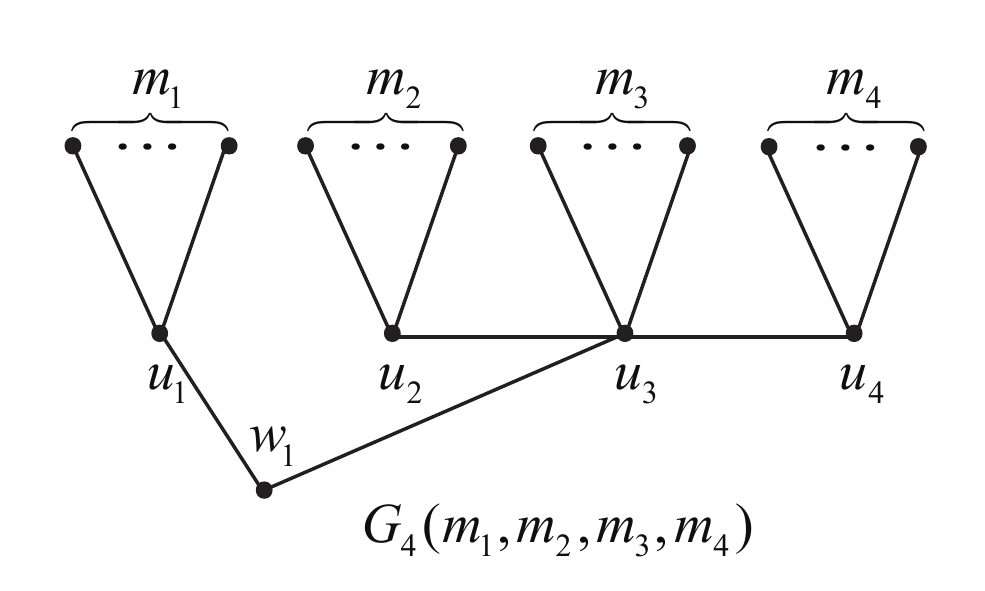}
\includegraphics[width=36mm]{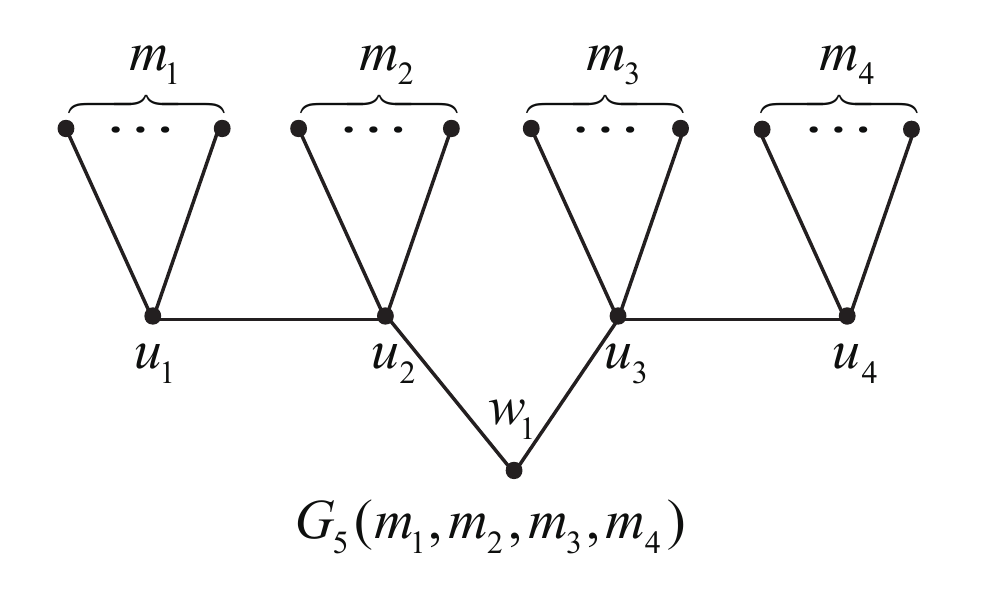}
\includegraphics[width=36mm]{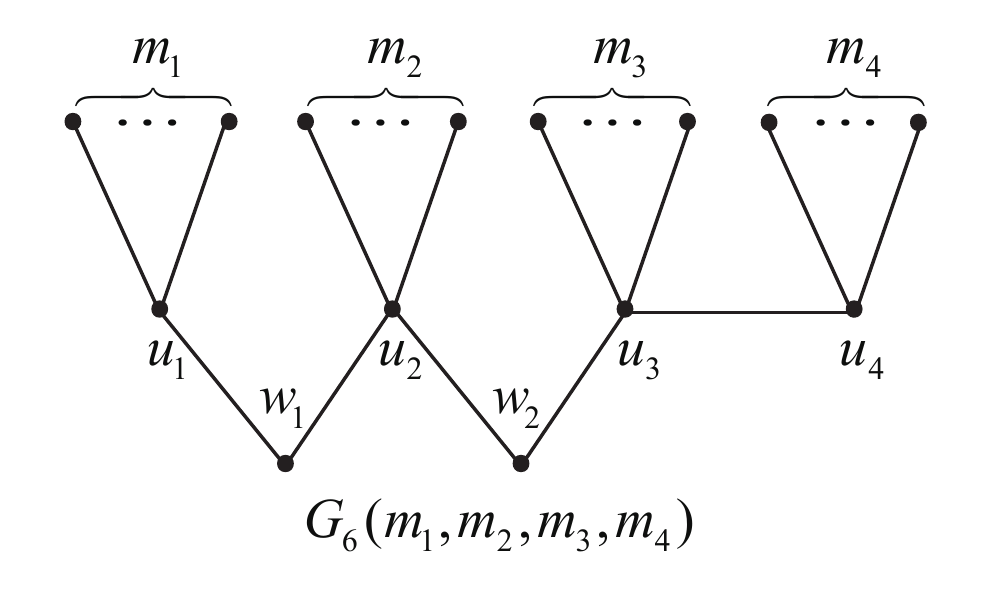}
\includegraphics[width=36mm]{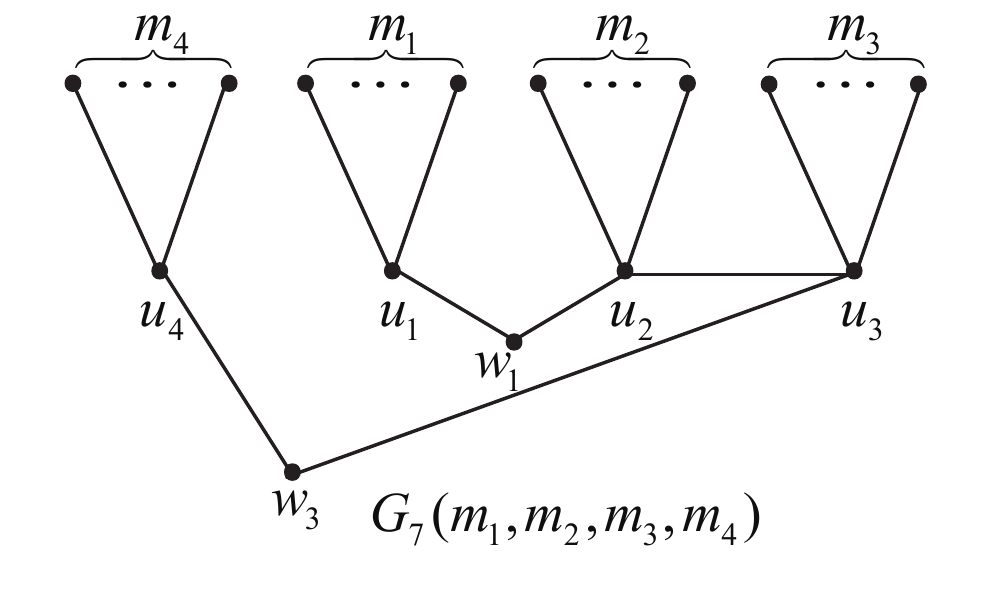}
\includegraphics[width=36mm]{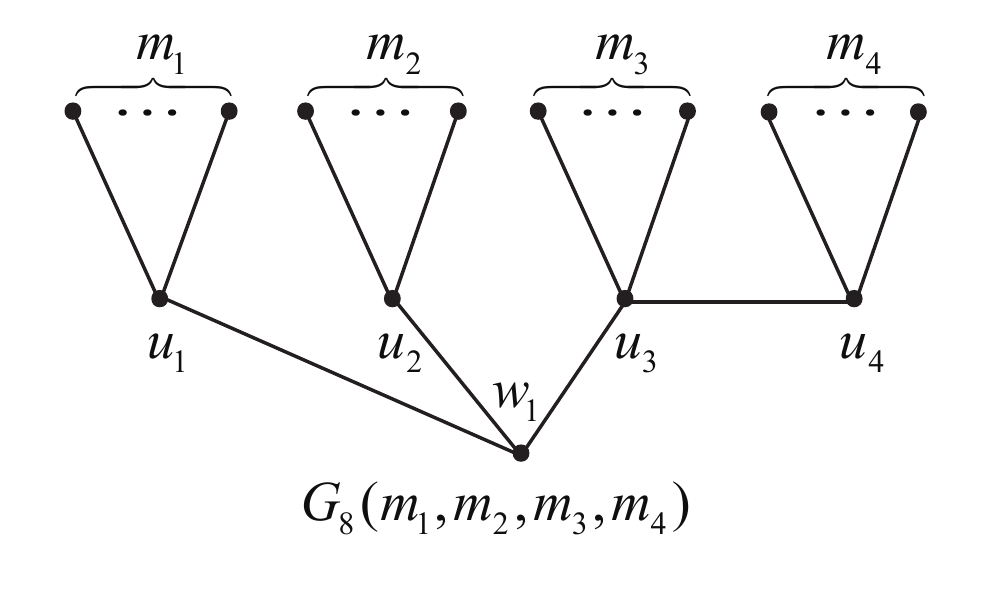}
\includegraphics[width=36mm]{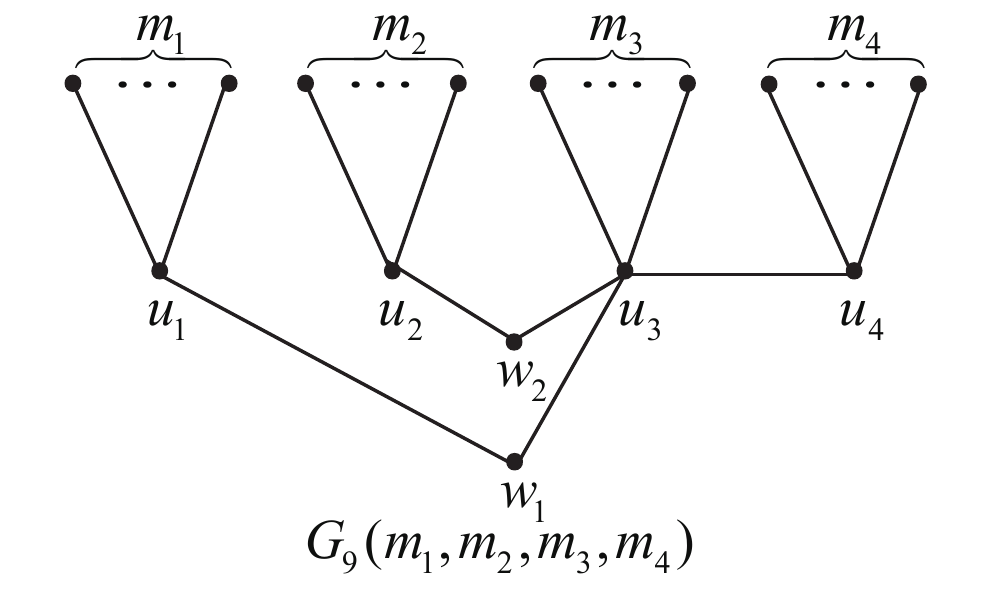}
\includegraphics[width=36mm]{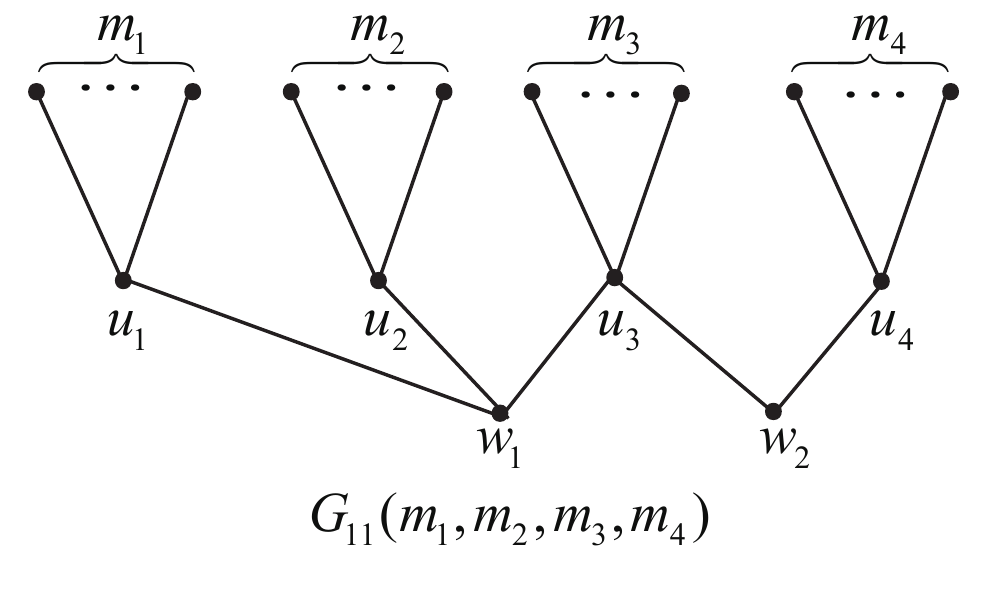}
\includegraphics[width=36mm]{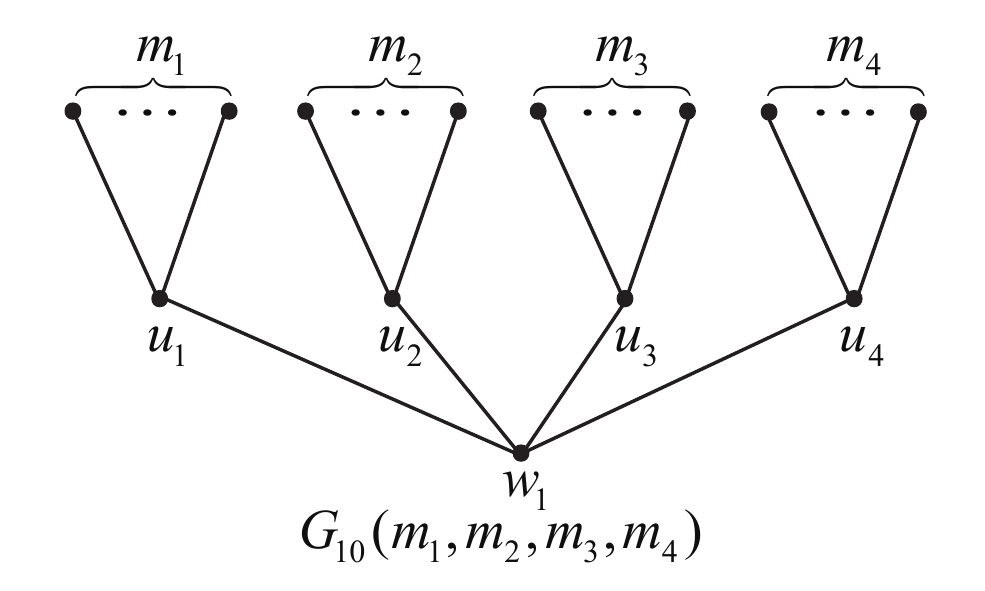}
\includegraphics[width=36mm]{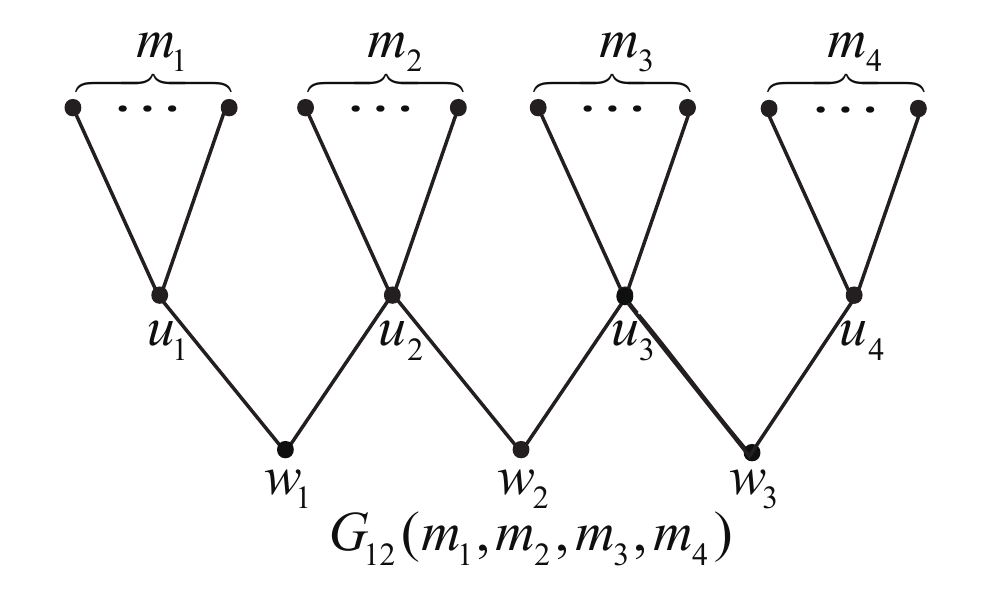}
\includegraphics[width=36mm]{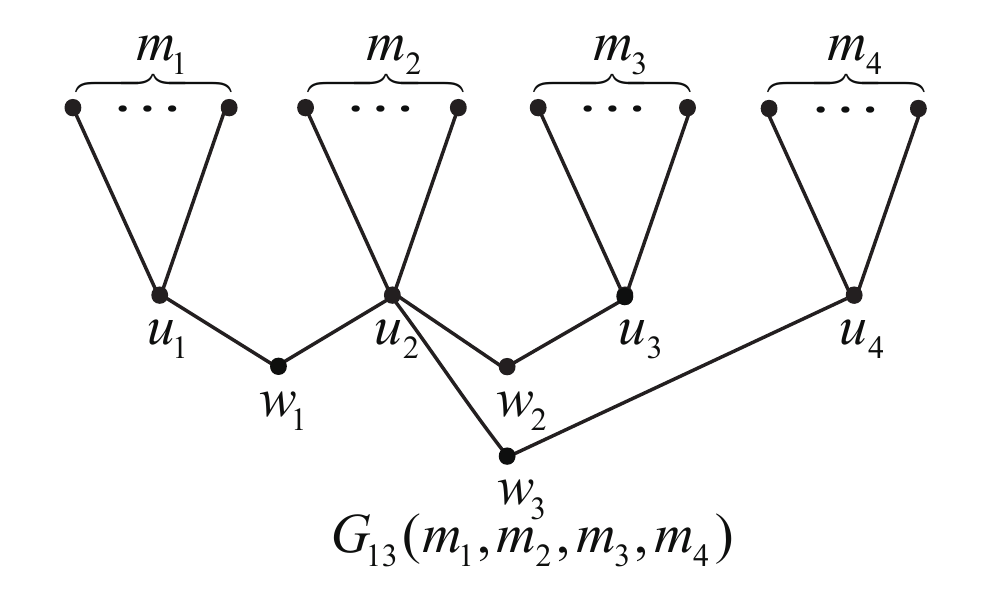}
\caption{Six cases of the subgraph $H$ and their corresponding trees}
\label{fig1}
\end{figure}



We use $G^{(1)}$ to denote a graph obtained from $G$ by subdividing one edge.\\
\noindent\textbf{Case 1.} Suppose $G\cong G_{11}(m_1,m_2,m_3,m_4)$. Let $X=\{x_1,x_2,\ldots,x_n\}^T$ be the positive eigenvector of $A_\alpha(G_{13}(m_1,m_3-1,m_2,m_4))$ corresponding to $\lambda_\alpha(G_{13}(m_1,m_3-1,m_2,m_4))$.
\begin{itemize}
	\item [(i)] If $x_{w_1}\geq x_{w_2}$, by using Lemma \ref{le:g-uw+vw}, then $\lambda_{\alpha}(G_{13}(m_1,m_3-1,m_2,m_4)-u_3w_2+u_3w_1))>\lambda_{\alpha}(G_{13}(m_1,m_3-1,m_2,m_4))$. Notice that $G_{13}(m_1,m_3-1,m_2,m_4)-u_3w_2+u_3w_1\cong G_{11}(m_1,m_2,m_3,m_4)$, then $\lambda_{\alpha}(G_{11}(m_1,m_2,m_3,m_4))>\lambda_{\alpha}(G_{13}(m_1,m_3-1,m_2,m_4))$, which contradicts that $G$ has the minimum $A_\alpha$ spectral radius among all graphs in $\mathcal{G}_{n,n-4}$.
	\item[(ii)] If $x_{w_2}\geq x_{w_1}$, by using Lemma \ref{le:g-uw+vw}, then $
	\lambda_{\alpha}(G_{13}(m_1,m_3-1,m_2,m_4)-u_1w_1+u_1w_2))>\lambda_{\alpha}(G_{13}(m_1,m_3-1,m_2,m_4))$. Notice that $G_{13}(m_1,m_3-1,m_2,m_4)-u_1w_1+u_1w_2\cong G_{11}(m_1,m_2,m_3,m_4)$, then $\lambda_{\alpha}(G_{11}(m_1,m_2,m_3,m_4))>\lambda_{\alpha}(G_{13}(m_1,m_3-1,m_2,m_4))$, a contradiction.
	
	\noindent Thus $G\ncong G_{11}(m_1,m_2,m_3,m_4)$.
\end{itemize}
\noindent\textbf{Case 2.} Suppose $G\cong G_{10}(m_1,m_2,m_3,m_4)$. Let $X=\{x_1,x_2,\ldots,x_n\}^T$ be the positive eigenvector of $A_\alpha(G_{11}(m_1,m_2,m_3-1,m_4))$ corresponding to $\lambda(G_{11}(m_1,m_2,m_3-1,m_4))$.
\begin{itemize}
	\item [(i)] If $x_{w_1}\geq x_{w_2}$,  by using Lemma \ref{le:g-uw+vw}, then $\lambda_{\alpha}(G_{11}(m_1,m_2,m_3-1,m_4)-u_4w_2+u_4w_1))>\lambda_{\alpha}(G_{11}(m_1,m_2,m_3-1,m_4))$. Notice that $G_{11}(m_1,m_2,m_3-1,m_4)-u_4w_2+u_4w_1\cong G_{10}(m_1,m_2,m_3,m_4)$, then $\lambda_{\alpha}(G_{10}(m_1,m_2,m_3,m_4))>\lambda_{\alpha}(G_{11}(m_1,m_2,m_3-1,m_4))$, a contradiction.
	\item[(ii)] If $x_{w_2}\geq x_{w_1}$, by using Lemma \ref{le:g-uw+vw}, then $\lambda_{\alpha}(G_{11}(m_1,m_2,m_3-1,m_4)-u_1w_1-u_2w_1+u_1w_2+u_2w_2))>\lambda_{\alpha}(G_{11}(m_1,m_2,m_3-1,m_4))$. Notice that $G_{11}(m_1,m_2,m_3-1,m_4)-u_1w_1-u_2w_1+u_1w_2+u_2w_2\cong G_{10}(m_1,m_2,m_3,m_4)$, then $\lambda_{\alpha}(G_{10}(m_1,m_2,m_3,m_4))>\lambda_{\alpha}(G_{11}(m_1,m_2,m_3-1,m_4))$, a contradiction.	\\
	Thus $G\ncong G_{10}(m_1,m_2,m_3,m_4)$.
\end{itemize}
\noindent\textbf{Case 3.} Suppose $G\cong G_9(m_1,m_2,m_3,m_4)$.
\begin{itemize}
	\item[(i)] If $m_4=1$, then $G_9(m_1,m_2,m_3,1)\cong G_{13}(m_1,m_3,m_2,0)$.	
	\item[(ii)] If $m_4\geq 2$, then $d_G(u_3),d_G(u_4)\geq 3$ and $u_3u_4$ is an internal path. Let $G_9^{(1)}(m_1,m_2,m_3,\\m_4)$ be a graph obtained from $G_9(m_1,m_2,m_3,m_4)$ by subdividing $u_3u_4$. By using Lemma \ref{le:subdivide}, we have $\lambda_{\alpha}(G_9^{(1)}(m_1,m_2,m_3,m_4))< \lambda_{\alpha}(G_9(m_1,m_2,m_3,m_4))$. \\
	Notice that $G_{13}(m_1,m_3,m_2,m_4-1)$ is a proper subgraph of $G_9^{(1)}(m_1,m_2,m_3,m_4)$, by using Lemma \ref{le:g-uv}, we have $\lambda_{\alpha}(G_{13}(m_1,m_3,m_2,m_4-1))<\lambda_{\alpha}(G_9^{(1)}(m_1,m_2,m_3,m_4))< \lambda_{\alpha}(G_9(m_1,m_2,m_3,m_4))$, a contradiction. \\Thus $G\ncong G_9(m_1,m_2,m_3,m_4)$.
\end{itemize}
\noindent\textbf{Case 4.} Suppose $G\cong G_8(m_1,m_2,m_3,m_4)$.
\begin{itemize}
	\item[(i)] If $m_4=1$, then $G_8(m_1,m_2,m_3,1)\cong G_{11}(m_1,m_2,m_3,0)$, we have obtained that $G\ncong G_{11}$ in Case 1.	
	\item[(ii)] If $m_4\geq 2$, then $d_G(w_1)\geq 3,d_G(u_4)\geq 3$. Thus $u_3u_4$ is an internal path, if $m_3>0$; $w_1u_3u_4$ is an internal path, if $m_3=0$.  By an analogous argument as  Case 3(ii), we can derive a contradiction by subdividing $u_3u_4$ in $G_8$, which is   $\lambda_{\alpha}(G_{11}(m_1,m_2,m_3,m_4-1))<\lambda_{\alpha}(G_8^{(1)}(m_1,m_2,m_3,m_4))< \lambda_{\alpha}(G_8(m_1,m_2,m_3,m_4))$.
\\Thus $G\ncong G_8(m_1,m_2,m_3,m_4)$.
\end{itemize}
\noindent\textbf{Case 5.} Suppose $G\cong G_7(m_1,m_2,m_3,m_4)$.
\begin{itemize}
	\item[(i)] If $m_3=0$, $m_2\neq 0$ and $m_4=1$, then $G_7(m_1,m_2,0,1)\cong G_{12}(m_1,m_2,0,0)$.
	\item[(ii)] If $m_3=0$, $m_2\neq 0$ and $m_4\geq 2$, then $d_G(u_2)\geq 3,d_G(u_4)\geq 3$ and $u_2u_3w_3u_4$ is an internal path. By an analogous argument as Case 3(ii), we can derive a contradiction by subdividing $u_2u_3$ in $G_7$, which is   $\lambda_{\alpha}(G_{12}(m_1,m_2,m_3,m_4-1))<\lambda_{\alpha}(G_7^{(1)}(m_1,m_2,m_3,m_4))< \lambda_{\alpha}(G_7(m_1,m_2,m_3,m_4))$.
	\item[(iii)] If $m_2=m_3=0$, by using Lemma \ref{le:3.2}, we have $\lambda_{\alpha}(G_7(\lfloor\frac{n-6}{2}\rfloor,0,0,\lceil \frac{n-6}{2}\rceil))\leq \lambda_{\alpha}(G_{7}(m_1\\,0,0,m_4))$. Since $m_1+m_2+m_3+m_4+2=n-4$ and $n\geq 11$, then $\lfloor\frac{n-6}{2}\rfloor\geq 2$, thus $u_1w_1u_2u_3w_3u_4$ is an internal path. By an analogous argument as Case 3(ii), we can derive a contradiction by subdividing $u_2u_3$ in $G_7$, which is  $\lambda_{\alpha}(G_{12}(\lfloor\frac{n-6}{2}\rfloor,0,0,\lceil \frac{n-6}{2}\rceil-1))<\lambda_{\alpha}(G_7^{(1)}(\lfloor\frac{n-6}{2}\rfloor,0,0,\lceil \frac{n-6}{2}\rceil))< \lambda_{\alpha}(G_7(\lfloor\frac{n-6}{2}\rfloor,0,0,\lceil \frac{n-6}{2}\rceil))$.
	\item[(iv)] If $m_2\geq 1$ and $m_3\geq 1$, then $d_G(u_2),d_G(u_3)\geq 3$, thus $u_2u_3$ is an internal path. By an analogous argument as Case 3(ii), we can derive a contradiction by subdividing $u_2u_3$ in $G_7$, which is $\lambda_{\alpha}(G_{12}(m_1,m_2,m_3,m_4-1))<\lambda_{\alpha}(G_7^{(1)}(m_1,m_2,m_3,m_4))< \lambda_{\alpha}(G_7(m_1,m_2,m_3,m_4))$.
	\\Thus $G\ncong G_7(m_1,m_2,m_3,m_4)$.
\end{itemize}
\noindent\textbf{Case 6.} Suppose $G\cong G_6(m_1,m_2,m_3,m_4)$.
\begin{itemize}
		\item[(i)] If $m_4=1$,  then $G_6(m_1,m_2,m_3,1)\cong G_{12}(m_1,m_2,m_3,0)$.	
	\item[(ii)] If  $m_4\geq 2$ and $m_3=0$, then $\max \{m_1,m_2\}\geq 1$ (otherwise the independent number of $G_6(0,0,0,m_4)$ is $n-3$). \\	
If $m_2=0$, by an analogous argument as Case 5(iii), then $d_G(u_1),d_G(u_4)\geq 3$ and $u_1w_1u_2w_2u_3u_4$ is an internal path. Thus we can derive a contradiction by subdividing $u_3u_4$ in $G_6$, which is  $\lambda_{\alpha}(G_{12}(\lfloor\frac{n-6}{2}\rfloor,0,0,\lceil \frac{n-6}{2}\rceil-1))<\lambda_{\alpha}(G_6^{(1)}(\lfloor\frac{n-6}{2}\rfloor,0,0,\lceil \frac{n-6}{2}\rceil))< \lambda_{\alpha}(G_6(\lfloor\frac{n-6}{2}\rfloor,0,0,\lceil \frac{n-6}{2}\rceil))<\lambda_{\alpha}(G_6(m_1,0,0,m_4))$.
	
If $m_2\geq 1$, then $d_G(u_2),d_G(u_4)\geq 3$ and $u_2w_2u_3u_4$ is an internal path.
By an analogous argument as Case 3(ii), we can derive a contradiction by subdividing $u_3u_4$ in $G_6$, which is  $\lambda_{\alpha}(G_{12}(m_1,m_2,m_3,m_4-1))<\lambda_{\alpha}(G_6^{(1)}(m_1,m_2,m_3,m_4))< \lambda_{\alpha}(G_6(m_1,m_2,m_3,m_4))$.
	\item[(iii)] If $m_3\geq 1$ and $m_4\geq 2$, then $d_G(u_3),d_G(u_4)\geq 3$ and $u_3u_4$ is an internal paph.  By an analogous argument as Case 3(ii), we can derive a contradiction by subdividing $u_3u_4$ in $G_6$, which is  $\lambda_{\alpha}(G_{12}(m_1,m_2,m_3,m_4-1))<\lambda_{\alpha}(G_6^{(1)}(m_1,m_2,m_3,m_4))< \lambda_{\alpha}(G_6(m_1,m_2,m_3,m_4))$.\\
	Thus $G\ncong G_6(m_1,m_2,m_3,m_4)$.
\end{itemize}
\noindent\textbf{Case 7.} Suppose $G\cong G_5(m_1,m_2,m_3,m_4)$.
\begin{itemize}
\item[(i)] If $m_1=1$, then $G_5(m_1,m_2,m_3,1)\cong G_{6}(0,m_2,m_3,m_4)$,  we have obtained $G\ncong G_6$ in Case 6.
\item[(ii)] If $m_1\geq 2$, $m_2=0$ and $m_3\geq 1$, then $d_G(u_1),d_G(u_3)\geq 3$ and $u_1u_2w_1u_3$ is an internal path.  By an analogous argument as Case 3(ii), we can derive a contradiction by subdividing $u_1u_2$ in $G_5$, which is $\lambda_{\alpha}(G_{6}(m_1-1,m_2,m_3,m_4))<\lambda_{\alpha}(G_5^{(1)}(m_1,m_2,m_3,m_4))\\< \lambda_{\alpha}(G_5(m_1,m_2,m_3,m_4))$.
\item[(iii)] If $m_1\geq 2$ and $m_2\geq 1$, then $d_G(u_1),d_G(u_2)\geq 3$ and $u_1u_2$ is an internal path. By an analogous argument as Case 3(ii), we can derive a contradiction by subdividing $u_1u_2$ in $G_5$, which is $\lambda_{\alpha}(G_{6}(m_1-1,m_2,m_3,m_4))<\lambda_{\alpha}(G_5^{(1)}(m_1,m_2,m_3,m_4))< \lambda_{\alpha}(G_5(m_1,m_2,m_3,m_4))$.
\item[(iv)] If $m_1\geq 2$ and $m_2=m_3=0$, by an analogous argument as Case 5(iii), then $d_G(u_1),d_G(u_4)\geq 3$ and $u_1u_2w_1u_3u_4$ is an internal path. Thus we can derive a contradiction by subdividing $u_1u_2$ in $G_5$, which is  $\lambda_{\alpha}(G_{6}(\lceil \frac{n-5}{2}\rceil-1,0,0,\lfloor\frac{n-5}{2}\rfloor))<\lambda_{\alpha}(G_5^{(1)}(\lceil \frac{n-5}{2}\rceil,0,0,\lfloor\frac{n-5}{2}\rfloor))< \lambda_{\alpha}(G_5(\lceil \frac{n-5}{2}\rceil,0,0,\lfloor\frac{n-5}{2}\rfloor))$.\\Thus $G\ncong G_5(m_1,m_2,m_3,m_4)$.
\end{itemize}
\noindent\textbf{Case 8.} Suppose $G\cong G_4(m_1,m_2,m_3,m_4)$.
\begin{itemize}
	\item[(i)] If $m_2=1$, then $G_4(m_1,1,m_3,m_4)\cong G_{9}(m_1,0,m_3,m_4)$, we have obtained $G\ncong G_9$ in Case 3.
	\item[(ii)] If $m_2\geq 2$, then $d_G(u_2),d_G(u_3)\geq 3$ and $u_2u_3$ is an internal path. By an analogous argument as Case 3(ii), we can derive a contradiction by subdividing $u_2u_3$ in $G_4$, which is $\lambda_{\alpha}(G_{9}(m_1,m_2-1,m_3,m_4))<\lambda_{\alpha}(G_4^{(1)}(m_1,m_2,m_3,m_4))< \lambda_{\alpha}(G_4(m_1,m_2,m_3,m_4))$.
\end{itemize}
\noindent\textbf{Case 9.} Suppose $G\cong G_3(m_1,m_2,m_3,m_4)$.
\begin{itemize}
	\item[(i)] If $m_4=1$, then $G_3(m_1,m_2,m_3,1)\cong G_{7}(m_1,m_2,m_3,0)$, we have obtained $G\ncong G_7$ in Case 5.	
	\item[(ii)] If $m_4\geq 2$ and $\max\{m_2,m_3\}\geq 1$, then $u_3u_4$ ($m_3\geq 1$) or $u_2u_3u_4$ ($m_3=0$) is an internal path. By an analogous argument as Case 3(ii), we can derive a contradiction by subdividing $u_3u_4$ in $G_3$, which is $\lambda_{\alpha}(G_{7}(m_1,m_2,m_3,m_4-1))<\lambda_{\alpha}(G_3^{(1)}(m_1,m_2,m_3,m_4))< \lambda_{\alpha}(G_3(m_1,m_2,m_3,m_4))$.
	\item[(iii)] If $m_4\geq 2$ and $m_2=m_3=0$,  by an analogous argument as Case 5(iii), then $d_G(u_1),d_G(u_4)\geq 3$ and $u_1w_1u_2u_3u_4$ is an internal path. So we can derive a contradiction by subdividing $u_3u_4$ in $G_3$, which is  $\lambda_{\alpha}(G_{7}(\lfloor\frac{n-5}{2}\rfloor,0,0,\lceil \frac{n-5}{2}\rceil-1))<\lambda_{\alpha}(G_3^{(1)}(\lfloor\frac{n-5}{2}\rfloor,0,0,\lceil \frac{n-5}{2}\rceil))< \lambda_{\alpha}(G_3(\lfloor\frac{n-5}{2}\rfloor,0,0,\lceil \frac{n-5}{2}\rceil))<\lambda_{\alpha}(G_3(m_1,0,0,m_4))$.\\
	Thus $G\ncong G_3(m_1,m_2,m_3,m_4)$.
\end{itemize}
\noindent\textbf{Case 10.} Suppose $G\cong G_2(m_1,m_2,m_3,m_4)$.
\begin{itemize}
	\item [(i)] If $m_1=1$, then $G_2(1,m_2,m_3,m_4)\cong G_{4}(0,m_3,m_2,m_4)$, we have obtained  $G\ncong G_4$ in Case 8.
	\item [(ii)] If $m_1\geq 2$, then  $d_G(u_1),d_G(u_2)\geq 3$ and  $u_1u_2$ is an internal path. By an analogous argument as Case 3(ii),  we can derive a contradiction by subdividing $u_1u_2$ in $G_2$, which is $\lambda_{\alpha}(G_{4}(m_1-1,m_3,m_2,m_4))<\lambda_{\alpha}(G_2^{(1)}(m_1,m_2,m_3,m_4))< \lambda_{\alpha}(G_2(m_1,m_2,m_3,m_4))$.\\Thus $G\ncong G_2(m_1,m_2,m_3,m_4)$.
\end{itemize}
\noindent\textbf{Case 11.}
 Suppose $G\cong G_1(m_1,m_2,m_3,m_4)$. \\Then $m_1,m_2,m_3,m_4\geq 1$ and $d_G(u_2),d_G(u_3)\geq 3$, thus $u_2u_3$ is an internal path. By an analogous argument as Case 3(ii),  we can derive a contradiction by subdividing $u_2u_3$ in $G_1$, which is $\lambda_{\alpha}(G_{5}(m_1,m_2-1,m_3,m_4))<\lambda_{\alpha}(G_1^{(1)}(m_1,m_2,m_3,m_4))< \lambda_{\alpha}(G_1(m_1,m_2,m_3,m_4))$.\\
Thus $G\ncong G_1(m_1,m_2,m_3,m_4)$.\\
Therefore, $G\cong G_{12}(m_1,m_2,m_3,m_4)$ or $G_{13}(m_1,m_2,m_3,m_4)$  in Figure \ref{fig1} for $\alpha\in[0,1)$.\\
These complete the proof.
\end{proof}

\noindent\begin{lemma}\label{le:ch-11}
Let $G_1$, $G_2$ are two graphs of order $n$, whose structures are  $G_{12}$ or $G_{13}$ in Figure \ref{fig1}. If $\alpha\in [\frac{1}{2},1)$ and  $\Delta(G_1)-\Delta(G_2)\geq 1$, then $\lambda_{\alpha}(G_1)>\lambda_{\alpha}(G_2)$.
\end{lemma}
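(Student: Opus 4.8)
The plan is to reduce the comparison of the two $A_\alpha$ spectral radii to an explicit inequality between the spectral radii of the small equitable quotient matrices that govern $G_{12}$ and $G_{13}$. Since both graphs are trees built from one of two fixed ''cores'' (the $P_4$-type core for $G_{13}$ and the $K_1\cup K_1\cup P_2$-type core for $G_{12}$, per Figure \ref{fig1}) with $m_1,\dots,m_4$ pendant paths attached, the key structural feature I would exploit is that the maximum degree $\Delta(G_i)$ is attained at exactly one of the core vertices, namely the vertex carrying the largest bundle of pendant edges. The hypothesis $\Delta(G_1)-\Delta(G_2)\ge 1$ therefore says that the ''busiest'' core vertex of $G_1$ has strictly larger degree than that of $G_2$. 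My first step is to identify, in each of $G_{12}$ and $G_{13}$, which core vertex $u_j$ is major of maximum degree, and to record $\Delta = \alpha$-weighted degree data via the eigenequation (\ref{AX}) at that vertex.

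The main tool I would invoke is the lower bound of Lemma \ref{le:bound1}: for $\alpha\in[\frac12,1)$ one has $\lambda_\alpha(G)\ge \alpha\Delta + \frac{(1-\alpha)^2}{\alpha}$, with a matching flavor of upper control coming from Lemma \ref{le:bound2}, namely $\lambda_\alpha(G)\le \max_{uv\in E}\{\alpha d_G(u)+(1-\alpha)d_G(v)\}$. The strategy is to show that the lower bound for $G_1$ driven by its larger $\Delta(G_1)$ already exceeds the upper bound attainable by $G_2$. Concretely, since every edge of these trees is incident to a vertex of degree at most $3$ (the core vertices) while the pendant-path vertices have degree $\le 2$, the quantity $\alpha d_G(u)+(1-\alpha)d_G(v)$ for $G_2$ is controlled by $\alpha\Delta(G_2)+(1-\alpha)\cdot(\text{small constant})$. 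I would compare $\alpha\Delta(G_1)+\frac{(1-\alpha)^2}{\alpha}$ against this, using $\Delta(G_1)\ge \Delta(G_2)+1$ so that the leading gap $\alpha(\Delta(G_1)-\Delta(G_2))\ge \alpha$ must dominate the bounded remainder terms.

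The honest obstacle is that a crude application of Lemmas \ref{le:bound1} and \ref{le:bound2} may not separate the two quantities when $\Delta(G_2)$ itself is small (e.g.\ $\Delta=3$, the generic core degree), because the gap of size $\alpha$ competes with $O(1-\alpha)$ terms and the bounds are not tight for trees. To handle this I would instead work directly with the equitable quotient matrices: by Definition \ref{def-M} and Lemma \ref{equ M}, $\lambda_\alpha(G_i)$ equals the largest root of the characteristic polynomial $f_i(x)$ of a quotient matrix $B(G_i)$ whose entries are explicit affine functions of $\alpha$ and of the $m_j$. The plan is then to evaluate $f_1$ and $f_2$ at a common reference value (for instance at $\lambda_\alpha(G_2)$, the largest root of $f_2$) and show $f_1(\lambda_\alpha(G_2))<0$, which forces $\lambda_\alpha(G_1)>\lambda_\alpha(G_2)$ by the usual sign argument once one checks that $\lambda_\alpha(G_2)$ exceeds the largest vertex of the parabola-like $f_1$. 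The degree hypothesis enters precisely as a positivity statement about the coefficient difference $f_2(x)-f_1(x)$ on the relevant range of $x$.

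I expect the hardest part to be the bookkeeping needed to treat $G_{12}$ and $G_{13}$ and the several placements of the maximal-degree vertex uniformly, together with verifying the monotonicity/sign conditions on $f_1,f_2$ for all $\alpha\in[\frac12,1)$ and all admissible $m_1,\dots,m_4$; the restriction $\alpha\ge\frac12$ is what makes the $\frac{(1-\alpha)^2}{\alpha}$ term and the sign of the cross terms behave, so I would keep track of where that hypothesis is actually used rather than assuming it throughout. If the direct quotient-matrix comparison becomes unwieldy, a fallback is to pass through an intermediate graph that shares $\Delta(G_2)$ but has the core type of $G_1$ (or vice versa) and apply Lemma \ref{le:3.2} or Lemma \ref{le:g-uw+vw} to move one pendant edge and raise $\lambda_\alpha$, thereby reducing to a pure ''one extra pendant edge'' comparison where the eigenequation (\ref{AX}) at the hub gives the strict increase directly.
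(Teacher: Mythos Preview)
Your first two paragraphs already contain the paper's entire proof, and your ``honest obstacle'' is illusory. In $G_{12}$ and $G_{13}$ no two of the hub vertices $u_1,u_2,u_3,u_4$ are adjacent (there is always a $w_j$ of degree $2$ between them), so every neighbour of a vertex of maximum degree has degree at most $2$. Hence the ``small constant'' in your upper bound is exactly $2$, and Lemma~\ref{le:bound2} gives $\lambda_\alpha(G_2)\le \alpha\Delta(G_2)+2(1-\alpha)$ (strict, since $G_2$ is not regular). Combining with the lower bound $\lambda_\alpha(G_1)>\alpha\Delta(G_1)+\frac{(1-\alpha)^2}{\alpha}$ from Lemma~\ref{le:bound1} and using $\Delta(G_1)-\Delta(G_2)\ge 1$ yields
\[
\lambda_\alpha(G_1)-\lambda_\alpha(G_2)>\alpha+\frac{(1-\alpha)^2}{\alpha}-2(1-\alpha)=\frac{(2\alpha-1)^2}{\alpha}\ge 0
\]
for $\alpha\in[\tfrac12,1)$. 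This is precisely the paper's one-line argument; your feared competition between the $\alpha$ gap and the $O(1-\alpha)$ terms resolves itself as a perfect square.

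Two small corrections to your write-up: it is not true that ``every edge is incident to a vertex of degree at most $3$''; the hubs $u_i$ can have arbitrarily large degree. What matters is the converse, that every neighbour of a hub has degree at most $2$. Also, the elaborate quotient-matrix comparison and the fallback via Lemmas~\ref{le:3.2} or~\ref{le:g-uw+vw} are unnecessary here (and would be hard to make uniform over both templates $G_{12},G_{13}$ and all placements of the maximum). Drop everything after the second paragraph and just do the arithmetic above.
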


\begin{proof}
By Lemmas \ref{le:bound1} and \ref{le:bound2}, if  $\alpha\in[\dfrac{1}{2},1)$, then $\lambda_\alpha(G_1)> \alpha\Delta(G_1)+\frac{(1-\alpha)^2}{\alpha}$, $\lambda_\alpha(G_2)<\alpha\Delta(G_2)+2(1-\alpha)$.

Notice that
$\lambda_\alpha(G_1)-\lambda_\alpha(G_2)>\alpha\Delta(G_1)+\frac{(1-\alpha)^2}{\alpha}-\alpha\Delta(G_2)-2(1-\alpha)=\alpha(\Delta(G_1)-\Delta(G_2))+\frac{1-4\alpha+3\alpha^2}{\alpha}\geq\frac{(2\alpha-1)^2}{\alpha}\geq0$, thus $\lambda_{\alpha}(G_1)>\lambda_{\alpha}(G_2)$.

It completes the proof.
\end{proof}

\begin{lemma}\label{le:3.7}
Let $G\cong G_{12}(m_1,m_2,m_3,m_4)$ in Figure \ref{fig1},  $d=(d_{G}(u_1),d_{G}(u_2),d_{G}(u_3),d_{G}(u_4))$ be the degree subsequence of $G$ satisfying $\max\limits _{1\leq i,j\leq 4}|d_G(u_i)-d_G(u_j)|\leq 2$.
If $m_1\geq 1$ \rm{(resp.} $m_4\geq 1$) and $\Delta(G)> m_1+1$ \rm{(resp.} $\Delta(G)>m_4+1$),  $\alpha\in[\frac{1}{2},1)$, then $x_{u_1}\leq x_{u_2}$ \rm{(resp.} $x_{u_4}\leq x_{u_3}$).

\end{lemma}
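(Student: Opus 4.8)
The plan is to work directly with the Perron eigenvector $X$ of $A_\alpha(G)$ and the eigenequation \eqref{AX}, comparing only the two entries $x_{u_1}$ and $x_{u_2}$. Write $\lambda=\lambda_\alpha(G)$. From the figure, $u_1$ is a spine endpoint of the $P_4$-spine caterpillar $G_{12}$ carrying $m_1$ pendant leaves and one spine-neighbour $u_2$, so $d_G(u_1)=m_1+1$. Each pendant neighbour $w$ of $u_1$ satisfies $\lambda x_w=\alpha x_w+(1-\alpha)x_{u_1}$, hence $x_w=\tfrac{1-\alpha}{\lambda-\alpha}x_{u_1}$; here $\lambda-\alpha>0$ for $\alpha\in[\tfrac12,1)$ by Lemma \ref{le:bound1}, and by Lemma \ref{le:g-u=g-v} all $m_1$ pendants at $u_1$ share this common value. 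Setting $c=\tfrac{1-\alpha}{\lambda-\alpha}>0$, the eigenequation at $u_1$ becomes
$$\bigl(\lambda-\alpha(m_1+1)-(1-\alpha)m_1 c\bigr)x_{u_1}=(1-\alpha)x_{u_2}.$$
Denoting the bracket by $A_1$, positivity of $X$ forces $A_1>0$ and $x_{u_1}=\tfrac{1-\alpha}{A_1}x_{u_2}$, so the target inequality $x_{u_1}\le x_{u_2}$ is equivalent to $A_1\ge 1-\alpha$.

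Substituting $c=\tfrac{1-\alpha}{\lambda-\alpha}$ and clearing the positive denominator $\lambda-\alpha$, the condition $A_1\ge 1-\alpha$ rearranges to the quadratic inequality
$$f(\lambda):=(\lambda-\alpha m_1-1)(\lambda-\alpha)-(1-\alpha)^2 m_1\ge 0.$$
I would then feed in a lower bound for $\lambda$: the hypotheses $m_1\ge 1$ and $\Delta(G)>m_1+1=d_G(u_1)$ give $\Delta(G)\ge m_1+2$, so Lemma \ref{le:bound1} in its $\alpha\in[\tfrac12,1)$ form yields $\lambda\ge\lambda_0:=\alpha(m_1+2)+\tfrac{(1-\alpha)^2}{\alpha}$. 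Since $f$ is an upward parabola, it suffices to verify $f(\lambda_0)\ge 0$ together with $\lambda_0$ lying to the right of the vertex $\lambda^{*}=\tfrac{\alpha m_1+1+\alpha}{2}$; then $f$ is increasing on $[\lambda_0,\infty)$ and $f(\lambda)\ge f(\lambda_0)\ge 0$ for the true value of $\lambda$. This is where $\alpha\ge\tfrac12$ is essential: expanding $f(\lambda_0)$ collects into $m_1\,\alpha(2\alpha-1)$ plus a manifestly positive $\alpha$-term, both nonnegative precisely when $\alpha\ge\tfrac12$, while $\lambda_0-\lambda^{*}=\tfrac{\alpha m_1}{2}+\tfrac{3\alpha-1}{2}+\tfrac{(1-\alpha)^2}{\alpha}>0$ again uses $\alpha\ge\tfrac12$.

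The statement for $u_4,u_3$ is symmetric: the relabelling $u_1\leftrightarrow u_4$, $u_2\leftrightarrow u_3$ (with the two pendant-blocks) preserves the caterpillar, so the identical argument with $m_4$ in place of $m_1$ gives $x_{u_4}\le x_{u_3}$. The main obstacle is precisely the quadratic verification of the middle paragraph: one must confirm that the crude lower bound $\lambda_0$ from Lemma \ref{le:bound1} already lands beyond the larger root of $f$ rather than below the smaller root, so that $f(\lambda)\ge 0$ genuinely propagates to the actual $\lambda$; the check $\lambda_0>\lambda^{*}$ is exactly what closes this gap. Note that only $\Delta(G)>m_1+1$ enters the estimate directly, the balanced-degree assumption $\max_{i,j}|d_G(u_i)-d_G(u_j)|\le 2$ serving to fix the regime in which the lemma is later applied.
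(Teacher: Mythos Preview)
Your argument rests on a misreading of the graph $G_{12}$. You treat it as a $P_4$-spine caterpillar with $u_1$ adjacent to $u_2$, so that the eigenequation at $u_1$ reads
$(\lambda-\alpha(m_1+1)-(1-\alpha)m_1c)\,x_{u_1}=(1-\alpha)x_{u_2}$.
But in $G_{12}$ the spine is $u_1\,w_1\,u_2\,w_2\,u_3\,w_3\,u_4$ (a $P_7$), with each $w_i$ a degree-$2$ vertex sitting between consecutive $u$'s; see the paper's eigenequations, where $u_1$'s unique non-pendant neighbour is $w_1$, and $w_1$ in turn satisfies $\lambda x_{w_1}=2\alpha x_{w_1}+(1-\alpha)(x_{u_1}+x_{u_2})$. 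Consequently the correct reduction gives
\[
\frac{x_{u_1}}{x_{u_2}}=\frac{b}{\lambda-\alpha(m_1+1)-a_1-b},\qquad a_1=\tfrac{(1-\alpha)^2 m_1}{\lambda-\alpha},\ \ b=\tfrac{(1-\alpha)^2}{\lambda-2\alpha},
\]
and $x_{u_1}\le x_{u_2}$ is equivalent to $\lambda-\alpha(m_1+1)-a_1-2b\ge 0$, a \emph{cubic} condition after clearing denominators. Your quadratic $f(\lambda)=(\lambda-\alpha m_1-1)(\lambda-\alpha)-(1-\alpha)^2m_1\ge 0$ is simply not the eigenvector comparison for $G_{12}$; it is the comparison for a different caterpillar.

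There is a way to rescue your line of attack. Since $G_{12}$ under the stated hypotheses is none of the graphs in Lemma~\ref{Wang1}, one has $\lambda>2$, whence $2b=\tfrac{2(1-\alpha)^2}{\lambda-2\alpha}<1-\alpha$. Thus the \emph{stronger} inequality $\lambda-\alpha(m_1+1)-a_1-(1-\alpha)\ge 0$, which is exactly your $f(\lambda)\ge 0$, would imply the correct cubic condition. If you rewrite the argument this way---deriving the correct eigenequation via $w_1$, observing $2b<1-\alpha$ from $\lambda>2$, and only then invoking your quadratic estimate $f(\lambda_0)\ge 0$ with $\lambda_0=\alpha(m_1{+}2)+\tfrac{(1-\alpha)^2}{\alpha}$---you obtain a proof that is in fact shorter than the paper's, which instead analyses the cubic $g(x)=f(x)(x-\alpha)(x-2\alpha)$ through several layers of derivatives and a case split on $\Delta(G)\in\{m_1+1,m_1+2,m_1+3\}$ (this split is where the balanced-degree hypothesis enters for them). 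As written, however, your proof is for the wrong graph and the central identity $x_{u_1}=\tfrac{1-\alpha}{A_1}x_{u_2}$ is false.
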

\begin{proof}

Notice that $\Delta(G)=\max\{m_1+1,m_2+2,m_3+2,m_4+1\}$, $\sum_{i=1}^{4}m_i=n-7$, and $\max\limits _{1\leq i,j\leq 4}|d_G(u_i)-d_G(u_j)|\leq 2$, then $\sum_{i=1}^{4}d_G(u_i)=n-1$ and $m_1+1\leq \Delta(G)\leq m_1+3$.

Let $V_1= N_G(u_1)\setminus\{w_1\}$, $V_2= N_G(u_2)\setminus\{w_1,w_2\}$ and $V_3= N_G(u_3)\setminus\{w_3\}$. By using Lemma \ref{le:g-u=g-v}, for any two vertices in $V_1$ (resp. $V_2$ or $V_3$), the components of the eigenvector of $A_\alpha(G)$ corresponding to these two vertices are equal, thus we use $x_{v_1}$ (resp. $x_{v_2}$ or $x_{v_3}$) to denote this value.
\begin{equation}\label{eq:6}
\left\{
\begin{array}{c}
\begin{aligned}
\lambda_{\alpha}(G)x_{u_1} &=\alpha(m_1+1)x_{u_1}+(1-\alpha)(m_1x_{v_1}+x_{w_1}),\\
\lambda_{\alpha}(G)x_{u_2}&=\alpha(m_2+2)x_{u_2}+(1-\alpha)(m_2x_{v_2}+x_{w_2}+x_{w_1}),\\
\lambda_{\alpha}(G)x_{u_3}&=\alpha(m_3+2)x_{u_3}+(1-\alpha)(m_3x_{v_3}+x_{w_3}+x_{w_2}),\\
\lambda_{\alpha}(G)x_{u_4}&=\alpha(m_4+1)x_{u_4}+(1-\alpha)(m_4x_{v_4}+x_{w_3}),\\
\lambda_\alpha(G)x_{w_1}&=2\alpha x_{w_1}+(1-\alpha)(x_{u_1}+x_{u_2}),\\
\lambda_\alpha(G)x_{w_2}&=2\alpha x_{w_2}+(1-\alpha)(x_{u_2}+x_{u_3}),\\
\lambda_\alpha(G)x_{w_3}&=2\alpha x_{w_3}+(1-\alpha)(x_{u_3}+x_{u_4}),\\
\lambda_\alpha(G)x_{v_1}&=\alpha x_{v_1}+(1-\alpha)x_{u_1},\\
\lambda_\alpha(G)x_{v_2}&=\alpha x_{v_2}+(1-\alpha)x_{u_2},\\
\lambda_\alpha(G)x_{v_3}&=\alpha x_{v_3}+(1-\alpha)x_{u_3},\\
\lambda_\alpha(G)x_{v_4}&=\alpha x_{v_4}+(1-\alpha)x_{u_4}.\\
\end{aligned}
\end{array}
\right.
\end{equation}

 By using Lemma \ref{Wang1}, it is clear that $\lambda_{\alpha}(G)>2$. Then from (\ref{eq:6}), we have
\begin{equation}\label{eq:7}
\lambda_{\alpha}(G)x_{u_1}=\alpha(m_1+1)x_{u_1}+a_1x_{u_1}+b(x_{u_1}+x_{u_2}),
\end{equation}
{where  $b=\frac{(1-\alpha)^2}{\lambda_{\alpha}(G)-2\alpha}>0$, and  $a_i=\frac{(1-\alpha)^2}{\lambda_{\alpha}(G)-\alpha}m_i>0$ for $i=1,2,3,4$.}

From (\ref{eq:7}), we have
\begin{equation}\label{eq:11}
\frac{x_{u_1}}{x_{u_2}}=\frac{b}{\lambda_{\alpha}(G)-\alpha(m_1+1)-a_1-b}.
\end{equation}

If  $x_{u_1}\geq x_{u_2}$, then $\frac{b}{\lambda_{\alpha}(G)-\alpha(m_1+1)-a_1-b}\geq 1$, it is equal to that $\lambda_{\alpha}(G)-\alpha(m_1+1)-a_1-2b\leq 0$.
Else if  $x_{u_1}\leq x_{u_2}$, then  $\lambda_{\alpha}(G)-\alpha(m_1+1)-a_1-2b\geq 0$.

Let $f(x)=x-\alpha(m_1+1)-a_1-2b$ be a real function of $x$ for $\alpha\in[\frac{1}{2},1)$. That is
\begin{align*}
f(x)(x-\alpha)(x-2\alpha)&=x^3+(-m_1-4)\alpha x^2+((2m_1+3)\alpha^2+(2m_1 + 4)\alpha-m_1-2)x \notag \\
&+(- 4)(m_1 +1)\alpha^2+2(m_1+1)\alpha.
\end{align*}
Let $g(x)=f(x)(x-\alpha)(x-2\alpha)$. By using Lemma \ref{Wang1}, we have $x\geq 2$, thus $(x-\alpha)(x-2\alpha)\geq 0$, then  $g(x)\geq 0$ is equivalent to $f(x)\geq 0$. By calculating the derivative of $g(x)$ with respect to $x$, we have
\begin{align*}
g'(x)&=3x^2-2(m_1+4)\alpha x+(2m_1+3)\alpha^2+2(m_1+2)\alpha-(m_1+2),\\
g''(x)&=6x-2\alpha(m_1+4).
\end{align*}

Since $\alpha\in[\frac{1}{2},1)$ and $m_1+1\leq \Delta(G)\leq m_1+3$, by using Lemma \ref{le:bound1}, then
\begin{align*}
x&\geq \alpha\Delta(G)+\frac{(1-\alpha)^2}{\alpha}>\frac{\alpha\Delta(G)}{3}+\frac{(2\Delta(G)+1)\alpha^2-2\alpha+1}{3\alpha}\\
&\geq \frac{\alpha\Delta(G)}{3}+\frac{5\alpha^2-2\alpha+1}{3\alpha}\geq\frac{\alpha\Delta(G)}{3}+\alpha=\frac{\alpha(\Delta(G)+3)}{3}\\
&\geq \frac{\alpha(m_1+4)}{3}.
\end{align*}

Thus $g''(x)\geq 0$ for $\alpha\in[\frac{1}{2},1)$, so $g'(x)$ is an increasing function for $\alpha\in[\frac{1}{2},1)$. Hence
\begin{align*}
g'(x)&\geq g'(\alpha(m_1+1)+\frac{(1-\alpha)^2}{\alpha})\\
&=\frac{1}{\alpha^2}\big[(m_1^2 + 2m_1 - 1)\alpha^4 + (- 6m_1 - 4)\alpha^3 + (3m_1 + 14)\alpha^2 - 12\alpha + 3\big].
\end{align*}

Let $h(\alpha)=(m_1^2 + 2m_1 - 1)\alpha^4 + (- 6m_1 - 4)\alpha^3 + (3m_1 + 14)\alpha^2 - 12\alpha + 3$ be a real function of $\alpha$ for $\alpha\in[\frac{1}{2},1)$. Then
$$ h'(\alpha)=4\alpha^3(m_1^2 + 2m_1 - 1) - 3\alpha^2(6m_1 + 4) + 2\alpha(3m_1 + 14) - 12,$$
$$ h''(\alpha)=6m_1 + 12\alpha^2(m_1^2 + 2m_1 - 1) - 6\alpha(6m_1 + 4) + 28,$$
$$h'''(\alpha)=24\alpha(m_1^2 + 2m_1 - 1) - 36m_1 - 24.$$
By a direct calculation, we have

\begin{itemize}
\item[(i)] if $m_1\geq 3$ and $\alpha\in[\frac{1}{2},1),$ then $h'''(\alpha)>0,$
\item[(ii)] if $m_1=2$ and $\alpha\in[\frac{24}{42},1),$ then $h'''(\alpha)>0,$
\item[(iii)] if $m_1\leq 1$ and $\alpha\in[\frac{1}{2},1)$, then $h'''(\alpha)<0.$
\end{itemize}
Therefore $h''(\alpha)$ is an increasing function of $\alpha$ for $m_1\geq 3$ and $\alpha\in[\frac{1}{2},1)$. Through a direct calculation, we have

\begin{itemize}
	\item[(i)] if $m_1\geq 3$ and $\alpha\in[\frac{1}{2},1),$ then $h''(\alpha)\geq h''(\frac{1}{2})=3m_1^2-6m_1+13>0$,
   \item[(ii)] if $m_1=2$ and $\alpha\in[\frac{1}{2},1),$ then $h''(\alpha)>0$,
    \item[(iii)]  if $m_1=1$ and $\alpha\in[\frac{1}{2},\frac{15-\sqrt{21}}{12}),$then $h''(\alpha)>0$,
   \item[(vi)]  if $m_1=0$ and $\alpha\in[\frac{1}{2},\frac{\sqrt{30}-3}{3})$, then $h''(\alpha)>0$.
\end{itemize}
So $h'(\alpha)$ is an increasing function of $\alpha$ for $m_1\geq 2$ and $\alpha\in[\frac{1}{2},1)$. Then
\begin{itemize}
\item[(i)]  if $m_1\geq 3$ and $\alpha\in[\frac{1}{2},1),$ then $ h'(\alpha)\geq h'(\frac{1}{2})=\frac{1}{2}(m_1^2-m_1-3)> 0,$
\item[(ii)] if $m_1=2$ and $\alpha\in[0.5391,1)$, then $h'(\alpha)\geq 0$,
\item[(iii)] if $m_1=1$ and $\alpha\in[\frac{3}{4},1)$, then $h'(\alpha)\geq 0$,
\item[(iv)] if $m_1=0$ and $\alpha\in[\sqrt{7}-2,1)$, then $h'(\alpha)\geq 0$.
\end{itemize}

So $h(\alpha)$ is an increasing function of $\alpha$ for $m_1\geq 3$ and $\alpha\in[\frac{1}{2},1)$. Then
\begin{itemize}
\item[(i)]  if $m_1\geq 3$ and $\alpha\in[\frac{1}{2},1)$, then $g'(x)=\frac{1}{\alpha^2}h(\alpha)\geq \frac{1}{\alpha^2}h(\frac{1}{2})=\frac{1}{16\alpha^2}(m_1^2+2m_1-1)> 0$,
\item[(ii)]  if $m_1\in\{0,1,2\}$ and $\alpha\in[\frac{1}{2},1)$, then $g'(x)>0$.
\end{itemize}
Thus $g(x)$ is an increasing function of $x$ for $m_1\geq 0$ and $\alpha\in[\frac{1}{2},1)$.
Further, we consider the following three cases of $\Delta(G)$ by using Lemmas \ref{le:bound1} and \ref{le:bound2}.
\begin{itemize}
\item[(i)]If $\Delta(G)=m_1+1$, then
\begin{align*}
g(x)&\geq g\big(\alpha(m_1+1)+\frac{(1-\alpha)^2}{\alpha}\big)\\
&=\frac{(\alpha-1)^2}{\alpha}\big[(-\alpha^2-2\alpha+1)m_1-\frac{(\alpha-1)^2(2\alpha^2+2\alpha-1)}{\alpha^2}\big].
\end{align*}
Notice that if $m_1\leq \frac{2\alpha^2+2\alpha-1}{-\alpha^3-2\alpha^2+\alpha}$, then $g\big(\alpha(m_1+1)+\frac{(1-\alpha)^2}{\alpha}\big)\geq 0$. By a direct calculation, we have $\frac{2\alpha^2+2\alpha-1}{-\alpha^3-2\alpha^2+\alpha}<0$ for $\alpha\in[\frac{1}{2},1)$. Thus $g\big(\alpha(m_1+1)+\frac{(1-\alpha)^2}{\alpha}\big)<0$ for $m_1\geq 0$ and $\alpha\in [\frac{1}{2},1)$.
 \item[(ii)]If $\Delta(G)=m_1+2$, by a calculation like \textbf{(i)}, if $m_1\geq 1$ and $\alpha\in[\frac{1}{2},1),$ then
\begin{align*}
g(x)&\geq g(\alpha(m_1+2)+\frac{(1-\alpha)^2}{\alpha})\\
&=\alpha^3m_1^2+\frac{3\alpha^4-6\alpha^3+7\alpha^2-4\alpha+1}{\alpha}m_1+\frac{- 4\alpha^5 + 14\alpha^4 - 20\alpha^3 + 15\alpha^2 - 6\alpha + 1}{\alpha^3}\\&>0.
\end{align*}
That is $x_{u_1}\leq x_{u_2}$.
\item[(iii)]If $\Delta(G)=m_1+3$, by a direct calculation, if $m_1\geq 0$ and $\alpha\in[\frac{1}{2},1)$, then
\begin{align*}
g(x)&\geq g\big(\alpha(m_1+3)+\frac{(1-\alpha)^2}{\alpha}\big)\\
&=2\alpha^3m_1+\frac{(11\alpha^4 - 12\alpha^3 + 10\alpha^2 - 4\alpha + 1)}{\alpha}\\&+\frac{(12\alpha^6 - 26\alpha^5 + 37\alpha^4 - 32\alpha^3 + 18\alpha^2 - 6\alpha + 1)}{\alpha^3}>0.
\end{align*}
That is $x_{u_1}\leq x_{u_2}$.

Considering symmetry, if  $m_4\geq 1$ and $\Delta(G)>m_4+1$,  the conclusion is still established that $x_{u_4}\leq x_{u_3}$ for $\alpha\in[\frac{1}{2},1)$.
\end{itemize}

Therefore the proof is completed.
\end{proof}
\begin{lemma}\label{le:3.8}
	Let $k$ be a positive integer $(k\geq 1)$. Let $G^1\cong G_{12}(k-1,k-1,k-1,k-1)$, $G^2\cong G_{12}(k,k-2,k-2,k)$, where $G_{12}(m_1,m_2,m_3,m_4)$ is in Figure \ref{fig1}. Then $\lambda_{\alpha}(G^1)>\lambda_{\alpha}(G^2)$ for $\alpha\in[\frac{1}{2},1)$.
\end{lemma}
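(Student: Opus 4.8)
The plan is to exploit the central symmetry shared by $G^1$ and $G^2$, reduce each to a common $6\times 6$ equitable quotient matrix, and then compare the two characteristic polynomials at a single point. Both graphs have the form $G_{12}(s,t,t,s)$, with $(s,t)=(k-1,k-1)$ for $G^1$ and $(s,t)=(k,k-2)$ for $G^2$, so in each case the map $u_1\leftrightarrow u_4$, $u_2\leftrightarrow u_3$, $w_1\leftrightarrow w_3$, $w_2\mapsto w_2$ (together with the induced permutation of the pendant vertices) is an automorphism. By Lemma \ref{le:g-u=g-v} the Perron vector is constant on each of the six orbits, so writing $a=x_{u_1}=x_{u_4}$, $b=x_{u_2}=x_{u_3}$, $r=x_{w_1}=x_{w_3}$, $c=x_{w_2}$, and $P,Q$ for the two pendant values, I would record the six reduced eigenequations and note, via Lemma \ref{equ M}, that $\lambda_\alpha$ equals the spectral radius of the resulting nonnegative irreducible quotient matrix $B_{s,t}$.

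First I would eliminate the pendant and degree-two variables. Setting $\beta=\frac{(1-\alpha)^2}{\lambda-\alpha}$ and $\gamma=\frac{(1-\alpha)^2}{\lambda-2\alpha}$, the pendant equations give $(1-\alpha)P=\beta a$ and $(1-\alpha)Q=\beta b$, while the two $w$-equations give $(1-\alpha)r=\gamma(a+b)$ and $(1-\alpha)c=2\gamma b$. Substituting these into the $u_1$- and $u_2$-equations collapses the whole system to $L\,a=\gamma b$ and $R\,b=\gamma a$, where $L=\lambda-\alpha(s+1)-s\beta-\gamma$ and $R=\lambda-\alpha(t+2)-t\beta-3\gamma$. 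Hence every eigenvalue with $a,b\neq0$—and the Perron value qualifies since $a,b>0$—satisfies $\Phi_{s,t}(\lambda):=LR-\gamma^2=0$. Clearing denominators, $\Psi_{s,t}(\lambda):=\Phi_{s,t}(\lambda)(\lambda-\alpha)^2(\lambda-2\alpha)^2$ is monic of degree $6$ and equals $\det(\lambda I-B_{s,t})$; consequently $\lambda_\alpha$ is its largest root, and $\Psi_{s,t}(\lambda)>0$ for every $\lambda$ larger than that root.

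The decisive computation is the comparison of $\Phi_{k-1,k-1}$ and $\Phi_{k,k-2}$. Writing $L=L_0-s\mu$, $R=R_0-t\mu$ with $L_0=\lambda-\alpha-\gamma$, $R_0=\lambda-2\alpha-3\gamma$, $\mu=\alpha+\beta$, one has $\Phi_{s,t}=L_0R_0-\mu(tL_0+sR_0)+st\mu^2-\gamma^2$. The products $L_0R_0$ and the terms $\gamma^2$ are common to both graphs, and—crucially—the two pairs $(s,t)$ share the same sum $s+t=2k-2$, so almost everything cancels and I expect to be left with $\Phi_{k-1,k-1}-\Phi_{k,k-2}=\mu(R_0-L_0)+\mu^2=(\alpha+\beta)(\beta-2\gamma)$, using $R_0-L_0=-(\alpha+2\gamma)$. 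Because $\lambda>2\alpha$ (indeed $\lambda_\alpha>2$ for these trees by Lemma \ref{Wang1}), we have $0<\beta<\gamma$, whence $\beta-2\gamma<0$ and therefore $\Psi_{k-1,k-1}(\lambda)<\Psi_{k,k-2}(\lambda)$ for all $\lambda>2\alpha$. Evaluating at $\lambda=\lambda_\alpha(G^2)$, where $\Psi_{k,k-2}$ vanishes, yields $\Psi_{k-1,k-1}(\lambda_\alpha(G^2))<0$; since $\Psi_{k-1,k-1}$ is positive beyond its largest root $\lambda_\alpha(G^1)$, this forces $\lambda_\alpha(G^2)<\lambda_\alpha(G^1)$, as required.

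The main obstacle is organizational rather than conceptual: carrying out the elimination carefully enough to reach the clean scalar form $LR=\gamma^2$, and then recognizing that fixing the sum $s+t$ makes the $L_0R_0$ and $\gamma^2$ contributions cancel, so that the entire comparison collapses to the one-line sign fact $\beta<\gamma$. I assume $k\geq 2$ throughout, so that $G^2=G_{12}(k,k-2,k-2,k)$ has all four parameters nonnegative.
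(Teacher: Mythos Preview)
Your argument is correct and follows the same overall strategy as the paper: reduce via the equitable partition induced by the central symmetry, compare the resulting characteristic polynomials, and show their difference has a definite sign for $\lambda>2\alpha$ so that evaluating at $\lambda_\alpha(G^2)$ forces $\lambda_\alpha(G^1)>\lambda_\alpha(G^2)$.

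Where you depart from the paper is in the execution of the comparison. The paper writes out the two degree-$6$ characteristic polynomials $f_1,f_2$ of the $6\times 6$ quotient matrices in full, subtracts them to obtain a cubic in $x$ with coefficients in $\alpha$, and then asserts that this cubic is nonpositive for $x\ge 2\alpha$. You instead perform a Schur complement reduction symbolically, introducing $\beta=(1-\alpha)^2/(\lambda-\alpha)$ and $\gamma=(1-\alpha)^2/(\lambda-2\alpha)$ to collapse the system to the $2\times 2$ condition $LR=\gamma^2$, and then observe that because the two parameter pairs share the sum $s+t=2k-2$, the difference $\Phi_{k-1,k-1}-\Phi_{k,k-2}$ factors as $(\alpha+\beta)(\beta-2\gamma)$. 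The sign is then immediate from $0<\beta<\gamma$ for $\lambda>2\alpha$. One can check that $(\lambda-\alpha)^2(\lambda-2\alpha)^2(\alpha+\beta)(\beta-2\gamma)$ expands to exactly the paper's $f_1-f_2$, so the two computations are literally the same object; your route simply explains \emph{why} the difference has the claimed sign, whereas the paper leaves that verification implicit. Your remark that the argument needs $k\ge 2$ (so that $G^2=G_{12}(k,k-2,k-2,k)$ is well defined) is appropriate; the paper states $k\ge 1$ but only ever applies the lemma with $k\ge 2$.
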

\begin{proof}
 Let $V_1= N_{G^1}(u_1)\setminus\{w_1\}$, $V_2= N_{G^1}(u_2)\setminus\{w_1,w_2\}$ and $V_3= N_{G^1}(u_3)\setminus\{w_3\}$. Note that $V(G^1)=\{u_1,u_4\}\cup\{u_2,u_3\}\cup\{V_1,V_4\}\cup\{V_2,V_3\}\cup\{w_1,w_3\}\cup\{w_2\}$ is an equitable partition with an equitable quotient matrix
	$$B(G^1)=
	\left [
	\begin{matrix}
	\alpha k & 0 &(1-a)(k-1) & 0 & 1-\alpha & 0 \\
	 0 & \alpha(k+1) & 0 &(1-a)(k-1) & 1-\alpha & 1-\alpha \\
	1-\alpha & 0 & \alpha & 0 & 0 & 0  \\
	0	& 1-\alpha  &0& \alpha & 0 & 0  \\
	1-\alpha & 1-\alpha  &0& 0 & 2\alpha & 0  \\
	0 & 2(1-\alpha)  &0& 0 & 0 & 2\alpha
	\end{matrix}
	\right  ],
	$$
	It is the same equitable partition for $V(G^2)$ with an equitable matrix
	
	$$B(G^2)=
	\left [
	\begin{matrix}
	\alpha (k+1) & 0 &(1-a)k & 0 & 1-\alpha & 0 \\
     0 & \alpha k & 0 &(1-a)(k-2) & 1-\alpha & 1-\alpha \\
	1-\alpha & 0 & \alpha & 0 & 0 & 0  \\
	0	& 1-\alpha  &0& \alpha & 0 & 0  \\
	1-\alpha & 1-\alpha  &0& 0 & 2\alpha & 0  \\
	0 & 2(1-\alpha)  &0& 0 & 0 & 2\alpha
	\end{matrix}
	\right  ].
	$$
	Let $f_1(\alpha,k,x)$ be the characteristic polynomial of $G^1$, $f_2(\alpha,k,x)$ be the characteristic polynomial of $G^2$.
	\begin{align*}
	f_1(\alpha,k,x)&=x^6 + (- 7\alpha - 2\alpha k)x^5 + (\alpha^2k^2 + 11\alpha^2k + 17\alpha^2 + 4\alpha k + 4\alpha - 2k - 2)x^4 \\
	&+ (- 4\alpha^3k^2- 19\alpha^3k - 18\alpha^3 - 4\alpha^2k^2 - 26\alpha^2k - 14\alpha^2 + 2\alpha k^2 + 13\alpha k + 7\alpha)x^3 \\&+ (4\alpha^4k^2 + 12\alpha^4k+ 10\alpha^4 + 16\alpha^3k^2 + 46\alpha^3k + 12\alpha^3 - 4\alpha^2k^2 - 15\alpha^2k- 6\alpha^2 \\&- 4\alpha k^2 - 8\alpha k + k^2 + 2k)x^2
	+ (- 4\alpha^5k - 3\alpha^5 - 16\alpha^4k^2 - 24\alpha^4k - 6\alpha^4 - 8\alpha^3k^2 \\&- 4\alpha^3k + 11\alpha^3 + 16\alpha^2k^2 + 16\alpha^2k - 8\alpha^2 - 4\alpha k^2 - 4\alpha k + 2\alpha)x + 8\alpha^5k + 4\alpha^5 \\& +16\alpha^4k^2 - 4\alpha^4k - 10\alpha^4 - 16\alpha^3k^2 + 8\alpha^3 + 4\alpha^2k^2 - 2\alpha^2.
	\end{align*}	
	\begin{align*}
	f_2(\alpha,k,x)&=x^6 + (- 7\alpha - 2\alpha k)x^5 + (\alpha^2k^2 + 11\alpha^2k + 17\alpha^2 + 4\alpha k + 4\alpha - 2k - 2)x^4 \\&+ (- 4\alpha^3k^2- 19\alpha^3k - 19\alpha^3 - 4\alpha^2k^2 - 26\alpha^2k - 12\alpha^2 + 2\alpha k^2 + 13\alpha k + 6\alpha)x^3 \\& + (4\alpha^4k^2 + 12\alpha^4k+ 12\alpha^4 + 16\alpha^3k^2 + 46\alpha^3k + 10\alpha^3 - 4\alpha^2k^2 - 15\alpha^2k - 9\alpha^2 \\&- 4\alpha k^2 - 8\alpha k + 4\alpha + k^2+ 2k  - 1)x^2 + (- 4\alpha^5k - 3\alpha^5 - 16\alpha^4k^2 - 24\alpha^4k - 10\alpha^4\\& - 8\alpha^3k^2 - 4\alpha^3k + 21\alpha^3 + 16\alpha^2k^2 + 16\alpha^2k - 16\alpha^2 - 4\alpha k^2 - 4\alpha k + 4\alpha)x\\& + 8\alpha^5k + 4\alpha^5 + 16\alpha^4k^2 - 4\alpha^4k - 10\alpha^4 - 16\alpha^3k^2 + 8\alpha^3 + 4\alpha^2k^2 - 2\alpha^2.
	\end{align*} \vspace{-1cm}
	\begin{align*}
&f_1(\alpha,k,x)-f_2(\alpha,k,x)\\&=x\big[(- \alpha^3 + 2\alpha^2 - \alpha)x^2 + (2\alpha^4 - 2\alpha^3 - 3\alpha^2 + 4\alpha - 1)x - 4\alpha^4 + 10\alpha^3 - 8\alpha^2 + 2\alpha\big].
	\end{align*}
	It is clear that if $x\geq 2\alpha$, then $f_1(\alpha,k,x)-f_2(\alpha,k,x)\leq 0$ for $\alpha\in[\frac{1}{2},1)$. Notice that both $f_1(\alpha,k,x)$ and $f_2(\alpha,k,x)$ are monic polynomials with real roots, then $f_1(\alpha,k,x),\ f_2(\alpha,k,x)\\\geq 0$ for $x\geq x_{max}^1$, $x\geq x_{max}^2$, respectively, where $x_{max}^1,\ x_{max}^2$ are the largest real roots of $f_1(\alpha,k,x),\ f_2(\alpha,k,x)$, respectively. Thus if $f_1(\alpha,k,x)-f_2(\alpha,k,x)\leq 0$ for $x = \lambda_\alpha(G^2)$, then $\lambda_\alpha(G^1) > \lambda_\alpha(G^2)$, together with Lemma \ref{Wang1}, we have $2\alpha<\alpha\Delta+\frac{(1-\alpha)^2}{\alpha}<\lambda_{\alpha}(G^2)<\lambda_{\alpha}(G^1)$.
	
	It completes the proof.
\end{proof}

\noindent\begin{theorem}\label{th:main}
Let $G$ be the extremal graph with the minimum $A_\alpha$ spectral radius in $\mathcal{G}_{n,n-4}$ $(n\geq11)$ for $\alpha\in [\frac{1}{2},1)$. Let $k$ be a positive integer. Then we have the following four results, where $G_{12}(m_1,m_2,m_3,m_4)$ and $G_{13}(m_1,m_2,m_3,m_4)$ are in Figure \ref{fig1}.
\begin{itemize}
	\item[\rm(i)] If $n=4k$, $k\geq 3$, then
	 $G\cong G_{12}(k-2,k-2,k-2,k-1),\
	G_{12}(k-1,k-3,k-2,k-1),\
	G_{13}(k-2,k-3,k-1,k-1),$ or $
	G_{13}(k-1,k-4,k-1,k-1).$
	\item[\rm(ii)] If $n=4k+1$, $k\geq 3$, then
	$G\cong
     G_{12}(k-1,k-2,k-2,k-1)$, or $
     G_{13}(k-1,k-3,k-1,k-1)$.
	\item[\rm(iii)] If $n=4k+2$, $k\geq 3$, then
	$G\cong
	G_{12}(k,k-2,k-2,k-1)$, $
	G_{12}(k-1,k-1,k-2,k-1)$, $
	G_{12}(k,k-1,k-3,k-1)$, $
	G_{12}(k,k-3,k-1,k-1)$, $
	G_{12}(k,k- 2,k-3,k)$, $
	G_{13}(k,k-3,k-1,k-1)$, $
    G_{13}(k-1,k-2,k-1,k-1)$, or $
    G_{13}(k,k-4,k,k-1)$.
	\item[\rm(iv)] If $n=4k+3$, $k\geq 2$, then
	$G\cong
	G_{12}(k,k-1,k-2,k-1)$, $
	G_{12}(k,k-2,k-1,k-1)$, $
	G_{12}(k,k-2,k-2,k)$, $
	G_{13}(k,k-2,k-1,k-1)$, $
	G_{13}(k,k-3,k,k-1)$, or $
	G_{13}(k,k-4,k,k)$.
\end{itemize}
\end{theorem}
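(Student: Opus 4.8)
By Lemma~\ref{th:ch-10}, the extremal graph must be of the form $G_{12}(m_1,m_2,m_3,m_4)$ or $G_{13}(m_1,m_2,m_3,m_4)$, where $\sum_{i=1}^4 m_i = n-7$. So the entire problem reduces to a finite-dimensional optimization: among all choices of nonnegative integers $(m_1,m_2,m_3,m_4)$ summing to $n-7$, which configurations of type $G_{12}$ or $G_{13}$ minimize $\lambda_\alpha$? The strategy is to repeatedly invoke the structural lemmas \ref{le:ch-11}, \ref{le:3.7} and \ref{le:3.8} to cut down the candidate list until only those in the statement survive. I would organize the argument around the residue of $n$ modulo $4$, exactly as in the four cases (i)--(iv), because the balanced distribution of the $m_i$ depends on how evenly $n-7$ splits into four parts.

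\textbf{Step 1: bound the maximum degree.} The first, most decisive reduction uses Lemma~\ref{le:ch-11}: any extremal graph must \emph{minimize} $\Delta(G)$ over all $G_{12}/G_{13}$ graphs of order $n$, since a strictly larger maximum degree forces a strictly larger $\lambda_\alpha$. Because $\Delta(G)=\max\{m_1+1,m_2+2,m_3+2,m_4+1\}$, minimizing $\Delta$ pins down a narrow band for each $m_i$, essentially forcing $\max_{i,j}|d_G(u_i)-d_G(u_j)|\le 2$. This is precisely the hypothesis needed to apply Lemma~\ref{le:3.7}. I would compute the minimum possible $\Delta$ for each residue class of $n$ and enumerate the (finitely many) degree sequences achieving it.

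\textbf{Step 2: balance the pendant distribution.} Among the surviving candidates all sharing the minimum $\Delta$, I would apply Lemma~\ref{le:3.7} to compare eigenvector entries $x_{u_i}$ and then Lemma~\ref{le:g-uw+vw} (the edge-shifting lemma) to move pendant edges from a vertex with a smaller eigenvector entry to one with a larger entry, which strictly decreases $\lambda_\alpha$ and hence eliminates any unbalanced configuration. Lemma~\ref{le:3.7} tells us that when $m_1\ge 1$ and $\Delta(G)>m_1+1$ we have $x_{u_1}\le x_{u_2}$ (and symmetrically $x_{u_4}\le x_{u_3}$), which governs the direction in which pendants may be profitably relocated. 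Iterating this comparison-and-shift on the inner versus outer vertices narrows the list to the most balanced arrangements, i.e. those where the pendant counts differ by the least possible amount subject to the fixed sum and the degree constraints.

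\textbf{Step 3: break remaining ties and compare the two balanced families.} After Steps 1--2 a small number of genuinely incomparable candidates remain, typically one ``spread'' configuration like $G_{12}(k,k-2,k-2,k)$ versus one ``even'' configuration like $G_{12}(k-1,k-1,k-1,k-1)$; Lemma~\ref{le:3.8} is tailored exactly to this comparison and shows the even split has the strictly larger spectral radius, eliminating it. The final candidates cannot be separated by any of the monotonicity lemmas, so I would confirm they are genuinely tied (or simply list them all as the extremal set, which is what the theorem does) by comparing their characteristic polynomials directly via the equitable-quotient-matrix technique of Lemmas~\ref{equ M} and the computation in Lemma~\ref{le:3.8}. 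The main obstacle is Step 3: the surviving configurations of type $G_{12}$ and type $G_{13}$ have the same maximum degree and equally balanced pendant distributions, so none of the clean monotonicity lemmas apply, and separating or certifying them as tied requires an explicit, case-by-case characteristic-polynomial comparison for each residue class. Carrying out these polynomial computations cleanly for all four residues of $n \bmod 4$, while keeping track of the boundary cases in $k$ (e.g. $k\ge 3$ versus $k\ge 2$), is the delicate bookkeeping that makes the theorem laborious rather than conceptually hard.
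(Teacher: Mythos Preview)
Your overall plan matches the paper's proof almost exactly: reduce to $G_{12}$ or $G_{13}$ via Lemma~\ref{th:ch-10}, force the minimum possible $\Delta$ via Lemma~\ref{le:ch-11}, and then prune the finite list of degree-sequence candidates using the eigenvector comparison of Lemma~\ref{le:3.7} together with an edge-shift, plus Lemma~\ref{le:3.8} for the one symmetric comparison it is designed for.

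Two points deserve correction. First, your Step~2 has the direction of Lemma~\ref{le:g-uw+vw} backward: moving pendants from the vertex with the smaller eigenvector entry to the one with the larger entry \emph{increases} $\lambda_\alpha$, not decreases it. The correct elimination argument is to apply Lemma~\ref{le:3.7} to the \emph{more balanced} candidate $G$, deduce (say) $x_{u_4}\le x_{u_3}$ in $G$, and then observe that the less balanced competitor $G'$ is obtained from $G$ by shifting a pendant from $u_4$ to $u_3$; hence $\lambda_\alpha(G')>\lambda_\alpha(G)$ and $G'$ is eliminated. Second, you omit Lemma~\ref{le:3.2}, which the paper uses in cases (iii) and (iv) for comparisons that exploit a reflection symmetry of the base graph (e.g.\ when $m_2=m_3$ one may swap $u_1\leftrightarrow u_4$). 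Lemma~\ref{le:3.7} only compares $x_{u_1}$ with $x_{u_2}$ (and $x_{u_4}$ with $x_{u_3}$), so several pruning steps---and essentially all of the $G_{13}$ eliminations---genuinely require Lemma~\ref{le:3.2} rather than Lemma~\ref{le:3.7}.

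Finally, note that the paper does \emph{not} carry out any cross-family characteristic-polynomial comparisons in your Step~3: once the list is cut down to the graphs in the statement, the proof simply stops. The theorem only asserts that the extremal graph lies among those listed, not which one achieves the minimum, so no tie-breaking computation is needed.
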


\begin{proof}
 Let $d=(d_{G}(u_1),d_{G}(u_2),d_{G}(u_3),d_{G}(u_4))$ be the degree subsequence of $G$. By using Lemmas \ref{th:ch-10} and \ref{le:ch-11}, we can obtain all possible graphs corresponding to the following four cases of $n$ respectively.
\begin{itemize}
	\item[\rm(i)] If $n=4k$ and $k\geq 3$, then $G\cong G_{12}(k-2,k-2,k-2,k-1)$, $G_{12}(k-1,k-3,k-2,k-1)$, $G_{13}(k-2,k-3,k-1,k-1)$ or  $G_{13}(k-1,k-4,k-1,k-1)$ for $\alpha\in[\frac{1}{2},1)$.
\item[\rm(ii)]
If $n=4k+1$ and $k\geq 3$, then $G\cong G_{12}(k-1,k-2,k-2,k-1)$ or $G_{13}(k-1,k-3,k-1,k-1)$ for $\alpha\in[\frac{1}{2},1)$.
\item[\rm(iii)]
If $n=4k+2$ and $k\geq 3$. Let $G_{j}^i\cong G_{j}(m_1^i,m_2^i,m_3^i,m_4^i)$ be the graph in $\mathcal{G}_{n,n-4}$ corresponding to $d_1=(k+1,k,k,k)$, $d_2=(k,k+1,k,k)$, $d_3=(k+1,k+1,k-1,k)$, $d_4=(k+1,k+1,k,k-1)$, $d_5=(k+1,k-1,k+1,k)$, $d_6=(k+1,k,k+1,k-1)$, $d_7=(k+1,k,k-1,k+1)$, $d_8=(k,k+1,k+1,k-1)$, $d_9=(k+1,k+1,k+1,k-2)$, $d_{10}=(k+1,k+1,k-2,k+1) $, respectively, where $i=1,2,\ldots,10$, $j=12,13$. For example, if $i=1$ and $j=12$, then $G_{12}^1(k,k-2,k-2,k-1)$ is  the graph corresponding to $d_1=(k+1,k,k,k)$.\\
By Lemma \ref{le:3.7}, Then
\begin{align*}
\lambda_{\alpha}(G_{12}^1)&<\lambda_{\alpha}(G_{12}^6),\ \lambda_{\alpha}(G_{12}^2)<\lambda_{\alpha}(G_{12}^8)<\lambda_{\alpha}(G_{12}^9),\ \lambda_{\alpha}(G_{12}^3)<\lambda_{\alpha}(G_{12}^4)<\lambda_{\alpha}(G_{12}^9).
\end{align*}

By Lemma \ref{le:3.2},  we have
$$ \lambda_{\alpha}(G_{12}^7)<\lambda_{\alpha}(G_{12}^{10}),\ \lambda_{\alpha}(G_{12}^{8})<\lambda_{\alpha}(G_{12}^{9}),\\
\lambda_{\alpha}(G_{13}^1)<\lambda_{\alpha}(G_{13}^{p}),\
\lambda_{\alpha}(G_{13}^2)<\lambda_{\alpha}(G_{13}^{q}),\\$$
where $p=6,7$, $q=3,4,8,9,10$.

Therefore, if $n=4k+2$ and $k\geq 3$, then $G\cong G_{12}^1$, $G_{12}^2$, $G_{12}^3$, $G_{12}^5$, $G_{12}^7$, $G_{13}^1$, $G_{13}^2$, or $G_{13}^5$ for $\alpha\in[\frac{1}{2},1)$.
\item[\rm(iv)]
If $n=4k+3$ and $k\geq 2$. Let $G_{j}^i\cong G_{j}(m_1^i,m_2^i,m_3^i,m_4^i)$ be the graph in $\mathcal{G}_{n,n-4}$ corresponding to $d_1=(k+1,k+1,k,k)$, $d_2=(k+1,k,k+1,k)$, $d_3=(k+1,k,k,k+1)$, $d_4=(k,k+1,k+1,k)$, $d_5=(k+1,k+1,k+1,k-1)$, $d_6=(k+1,k-1,k+1,k+1)$ respectively, where  $i=1,2,\ldots,6$, $j=12,13$.

By Lemma \ref{le:3.7},  we have
$\lambda_{\alpha}(G_{12}^1)<\lambda_{\alpha}(G_{12}^{5})$.

By Lemmas \ref{le:3.2} and \ref{le:3.8}, we have

\centerline{
$\lambda_{\alpha}(G_{12}^3)<\lambda_{\alpha}(G_{12}^{6})$,
$\lambda_{\alpha}(G_{12}^3)<\lambda_{\alpha}(G_{12}^{4})<\lambda_{\alpha}(G_{12}^5)$, $\lambda_{\alpha}(G_{13}^4)<\lambda_{\alpha}(G_{13}^{5})$.}

Therefore, if $n=4k+3$ and $k\geq 2$, then $G\cong G_{12}^1$, $G_{12}^2$, $G_{12}^3$, $G_{13}^1(\cong G_{13}^4$), $G_{13}^2(\cong G_{13}^3)$, $G_{13}^6$ for $\alpha\in[\frac{1}{2},1)$.
\end{itemize}
These complete the proof.
\end{proof}

\clearpage

\end{document}